\newlength{\originalbase}
\newcommand{\spacing}[1]{\setlength{\baselineskip}{#1\originalbase}}
\newtheorem{theorem}{Theorem}[section]
\newtheorem{proposition}[theorem]{Proposition}
\newtheorem{lemma}[theorem]{Lemma}
\newtheorem{definition}[theorem]{Definition}
\newtheorem{corollary}[theorem]{Corollary}
\renewcommand{\SS}{\mathbb{S}}
\newcommand{\RR}{\mathbb{R}}
\newcommand{\LL}{\mathcal{L}}
\newcommand{\KK}{\mathscr{K}}
\newcommand{\bs}{{\,\#\,}}
\newcommand{\cy}{\mathcal{C}}
\begin{document}

\begin{center}
{\bf\Large  Covering shadows with a smaller volume}\\[12mm]
\end{center}

\begin{center}
{\bf Daniel A. Klain} \\
Department of Mathematical Sciences\\
University of Massachusetts Lowell\\
Lowell, MA 01854 USA  \\
Daniel\_{}Klain@uml.edu\\
\end{center}

\vspace{4mm}
\begin{center}
{\small\em Larger things can hide behind smaller things.}
\end{center}
\vspace{6mm}
\spacing{1}

\begin{quotation}
{\small
\noindent
{\bf Abstract \/}  
For $n \geq 2$ a construction is given for 
convex bodies $K$ and $L$ in $\RR^n$ such that the orthogonal projection 
$K_u$ can be translated inside $L_u$ 
for every direction $u$, while the volumes of $K$ and $L$ satisfy $V_n(K) > V_n(L).$

A more general construction is then given for 
$n$-dimensional convex bodies $K$ and $L$ such that the orthogonal projection 
$K_\xi$ can be translated inside $L_\xi$ 
for every $k$-dimensional subspace $\xi$ of $\RR^n$, while 
the $m$-th intrinsic volumes of $K$ and $L$ satisfy
$V_m(K) > V_m(L)$ for all $m > k$.

It is then shown that, for each $k = 1, \ldots, n$, there is a class of bodies $\cy_{n,k}$ 
such that, if $L \in \cy_{n,k}$ and if the orthogonal projection 
$K_\xi$ can be translated into $L_\xi$ 
for every $k$-dimensional subspace $\xi$ of $\RR^n$, then $V_n(K) \leq V_n(L)$.

The families $\cy_{n,k}$, called $k$-cylinder bodies of $\RR^n$, form a strictly
increasing chain 
$$\cy_{n,1} \subset \cy_{n,2} \subset \cdots \subset \cy_{n,n-1} \subset \cy_{n,n},$$
where $\cy_{n,1}$ is precisely the collection of centrally symmetric compact
convex sets in $\RR^n$, while $\cy_{n,n}$ is the collection of all compact
convex sets in $\RR^n$.  Members of each family $\cy_{n,k}$ are seen to play
a fundamental role in relating covering conditions for projections to the theory of mixed volumes,
and members of $\cy_{n,k}$ are shown to satisfy certain geometric inequalities.
Related open questions are also posed.
}
\end{quotation}
\vspace{6mm}

Suppose that $K$ and $L$ are compact convex subsets of 
$n$-dimensional Euclidean space.
For a given dimension $1 \leq k < n$,
suppose that every $k$-dimensional orthogonal projection (shadow) of $K$
can be translated inside the corresponding projection of $L$.
Does it follow that $K$ has smaller volume than $L$?  In this article it is shown
that the answer in general is {\em no}.  It is then shown that the answer is {\em yes} if $L$ is chosen from
a suitable family of convex bodies that includes certain cylinders and other sets with a direct sum
decomposition.

Many inverse questions from convex and integral geometry take the following form:  Given two convex bodies $K$ and $L$,
and two geometric invariants $f$ and $g$ (such as volume, or surface area, or some measure of sections or projections),
does $f(K) \leq f(L)$ imply $g(K) \leq g(L)$?  If not, then what additional conditions on $K$ and $L$ are necessary?  

These questions are motivated in part
by the projection theorems of Rogers \cite{rogers}.
Rogers showed that if two compact convex sets
have translation congruent (or, more generally, homothetic) projections in every linear subspace of some chosen
dimension $k \geq 2$, then the original sets $K$ and $L$ must be translation congruent (or homothetic).
Rogers also proved analogous results for sections of sets with hyperplanes through a base point \cite{rogers}.
These results then set the stage for more general (and often much more difficult) questions, in which 
the rigid conditions of translation congruence or homothety are 
replaced with weaker conditions, such as containment up to translation, inequalities of measure, etc.

Two notorious questions of this kind are the Shephard Problem \cite{Shep}
(solved independently by  
Petty \cite{Petty-shep} and Schneider \cite{Schneider-shep}), 
and the Busemann-Petty Problem \cite{bus-pet} 
(solved in work of Gardner \cite{Gard-Busemann}, 
Gardner, Koldobsky, and Schlumprecht \cite{Gard-Kold}, and 
Zhang \cite{Zhang-Busemann-old,Zhang-Busemann}).  
Both questions address properties of bodies $K$ and $L$ 
that are assumed to be {\em centrally symmetric} about the origin.  

The Shephard Problem asks: if 
the $(n-1)$-dimensional volumes of the orthogonal projections 
$K_u$ and $L_u$ of convex bodies $K$ and $L$ satisfy the inequality 
$V_{n-1}(K_u) \leq V_{n-1}(L_u)$ for every direction $u$, does it follow that $V_n(K) \leq V_n(L)$?
Although there are ready counter-examples for general (possibly non-symmetric) convex bodies, 
the problem is more difficult to address
under the stated assumption that $K$ and $L$ are both centrally symmetric.  In this case  
Petty and Schneider have shown that, while the answer in general is still no for dimensions $n \geq 3$, 
the answer is yes when the convex set $L$ is a projection body; that is, a zonoid.  

The Busemann-Petty Problem addresses the analogous
question for sections through the origin.  
Suppose that convex bodies $K$ and $L$ are centrally symmetric about the origin.
If we assume that
the $(n-1)$-dimensional sections of $K$ and $L$ satisfy
$$V_{n-1}(K \cap u^{\perp}) \leq V_{n-1}(L \cap u^{\perp})$$ 
for every direction $u$, does it follow that $V_n(K) \leq V_n(L)$?  Surprisingly the answer is 
{\em no} for bodies of dimension $n \geq 5$ and {\em yes}
for bodies of dimension $n \leq 4$ (see
\cite{Gard-Busemann,Gard-Kold,Zhang-Busemann-old,Zhang-Busemann}).  
Moreover, Lutwak \cite{Lut1988} has shown that,
in analogy to the Petty-Schneider theorem, 
the answer is always {\em yes} when the set $L$ is an {\em intersection body}, a construct 
highly analogous to projection bodies (zonoids), but for which projection (the cosine transform) 
is replaced in the construction with intersection (the Radon transform).
A more complete discussion of background to 
the Busemann-Petty Problem, its solution, and its variations (some of which remain open), can be
found in the comprehensive book by Gardner \cite{Gard2006}.

Both of the previous problems assume that bodies in question are either
centrally symmetric or symmetric about the origin; 
that is, $K = -K$ and $L = -L$ (up to translation).  If this elementary assumption is omitted, 
then both questions are easily seen to have negative answers.  
For the projection problem, compare the Reuleaux triangle, 
and its higher dimensional analogues, with the Euclidean ball, or compare 
any non-centered convex body with its Blaschke body \cite{Gard2006}.
For the intersection problem, consider a non-centered planar set having an equichordal point,
or the dual analogue of the Blaschke body of a non-centered  set (See \cite[p. 117]{Gard2006}
or \cite{Lut1988}).  

In the present article we consider a related, but fundamentally different, family of questions.

Suppose that, instead of comparing the areas of the projections of $K$ and $L$, 
we assume that the projections of $L$ {\em can cover} translates of the projections of $K$. 
Specifically, suppose that, 
for each direction $u$,
the orthogonal projection
$K_u$ of $K$ can be translated so that it is 
contained inside the corresponding projection $L_u$ 
(although the required translation may vary depending on $u$).  
Does it follow that $K$ can be translated so that it is contained inside $L$?  
Does it even follow that $V_n(K) \leq V_n(L)$?
  
These questions have easily described negative answers in dimension 2, 
since the projections are 1-dimensional, and convex 1-dimensional sets have very little structure.  
(Once again, consider the Reuleaux triangle and the circle.)  
The interesting cases begin when comparing 2-dimensional projections of 3-dimensional objects, 
and continue from there.

For higher dimensions, a simple example illustrates once again 
that $K$ might {\em not} fit inside $L$, 
even though every projection of $L$ can be translated to 
cover the corresponding projection of $K$.  
Let $L$ denote the unit Euclidean 3-ball, 
and let $K$ denote the regular tetrahedron having edge length $\sqrt{3}$.  
Jung's Theorem \cite[p. 84]{Bonn2}\cite[p. 320]{Webster}
implies that every 2-projection of $K$ 
is covered by a translate of the unit disk.  But a simple computation shows that 
$L$ does not contain a translate of tetrahedron $K$.  
An analogous construction yields a similar result for higher dimensional simplices and Euclidean balls. 
One might say that, although $K$ can ``hide behind" $L$ from every observer's perspective, 
this does not imply that $K$ can hide {\em inside} $L$. 

In the previous counterexample it is still the case that the set $L$ having larger (covering) shadows 
also has {\em larger volume} than $K$.  Although the question of comparing volumes
is more subtle, there are counterexamples to this property as well.  

This article presents the following results for every dimension $n \geq 2$:
\begin{enumerate}
\item[$\mathbf{1}$.] There exist $n$-dimensional convex bodies $K$ and $L$ such that the orthogonal projection 
$K_u$ can be translated inside $L_u$ 
for every direction $u$, while $V_n(K) > V_n(L).$\\

\item[$\mathbf{2}$.] There is a large class of bodies $\cy_{n,n-1}$ 
such that, if $L \in \cy_{n,n-1}$ and if
$K_u$ can be translated inside $L_u$ 
for every direction $u$, then $V_n(K) \leq V_n(L)$.
\end{enumerate}
In particular, it will be shown that if the body $L$ having covering shadows is a {\em cylinder},
then $V_n(K) \leq V_n(L)$.  
The more general collection $\cy_{n,n-1}$, called $(n-1)$-cylinder bodies, play a role 
for the covering projection problem in
analogy to that of intersection bodies for the 
Busemann-Petty Problem and that of zonoids for the 
Shephard Problem. 

These results generalize to questions about shadows (projections) of arbitrary lower dimension.  
If $\xi$ is a $k$-dimensional subspace of $\RR^n$,
denote by $K_\xi$ the orthogonal projection of a body $K$ into $\xi$.  For convex bodies $K$ in $\RR^n$
and $0 \leq m \leq n$, denote by $V_m(K)$ the $m$th {\em intrinsic volume} of $K$.
The main theorems of this article also yield the following more general observations,
for each $n \geq 2$ and each $1 \leq k \leq n-1$:
\begin{enumerate}
\item[$\mathbf{1'}$.] There exist $n$-dimensional convex bodies $K$ and $L$ such that the orthogonal projection 
$K_\xi$ can be translated inside $L_\xi$ 
for every $k$-dimensional subspace $\xi$ of $\RR^n$, while $V_m(K) > V_m(L)$ for all $m > k$.\\

\item[$\mathbf{2'}$.] There is a class of bodies $\cy_{n,k}$ 
such that, if $L \in \cy_{n,k}$ and if the orthogonal projection 
$K_\xi$ can be translated inside $L_\xi$ 
for every $k$-dimensional subspace $\xi$ of $\RR^n$, then $V_n(K) \leq V_n(L)$.
\end{enumerate}

\sloppy
The aforementioned counterexamples are constructed in Sections~\ref{counter} and~\ref{gencounter}.
Cylinder bodies and their relation to projection and covering are described in 
Sections~\ref{sec-cover} and~\ref{sec-cyl}, leading to the Shadow Containment Theorem~\ref{omni-k}, 
which relates covering of shadows
to a family of inequalities for mixed volumes.
These developments 
lead in turn to Theorem~\ref{cylvol}, where it is shown that, if every shadow of a cylinder body $L$ 
contains a translate of the corresponding shadow of $K$,
then $L$ must have greater volume than $K$.  In Section~\ref{sec-ineq}
the counterexample constructions of Sections~\ref{counter} and~\ref{gencounter} are used to prove a family 
of geometric inequalities satisfied by members of each collection $\cy_{n,k}$.  Section~\ref{sec-ineq2} uses
Theorem~\ref{cylvol} to prove 
that $V_n(K) \leq nV_n(L)$ whenever the projections of $K$ can be translated inside those of $L$.

The constructions and theorems of this article motivate a number of new open questions related to covering projections,
some of which are posed in the final section.

\section{Preliminary background}
Denote by $\KK_n$ the set of compact convex subsets of $\RR^n$.  The $n$-dimensional
(Euclidean) volume of a convex set $K$ will be denoted $V_n(K)$.  If $u$ is a unit vector in
$\RR^n$, denote by $K_u$ the orthogonal projection of a set $K$ onto the subspace $u^\perp$.

Let $h_K: \RR^n \rightarrow \RR$ denote the support function of a compact convex set $K$;
that is,
$$h_K(v) = \max_{x \in K} x \cdot v$$
If $u$ is a unit vector in
$\RR^n$, denote by $K^u$ the support set of $K$ in the direction of $u$; that is,
$$K^u = \{x \in K \; | \; x \cdot u = h_K(u) \}.$$
If $P$ is a convex polytope, then $P^u$ is the face of $P$ having $u$ in its outer normal cone.

Given two
compact convex sets $K, L \in \KK_n$ and $a,b \geq 0$ denote
$$aK + bL = \{ax + by \; | \; x \in K \hbox{ and } y \in L\}$$
An expression of this form is called a {\em Minkowski combination} or 
{\em Minkowski sum}.  Because $K$ and $L$ are convex, the set $aK + bL$ is also convex.  
Convexity also implies that $aK + bK = (a+b)K$ for all $a,b \geq 0$.

Support functions are easily seen to satisfy the identity
$h_{aK+bL} = ah_K + bh_L$.  Moreover,
the volume of a Minkowski combination of two compact convex sets is given by {\em Steiner's formula:}
\begin{equation}
V_n(aK + bL) = \sum_{i=0}^n \binom{n}{i} \, a^{n-i} b^{i} 
V_{n-i, i}(K, L),
\label{steinform}
\end{equation}
where the {\em mixed volumes} 
$V_{i, n-i}(K, L)$ depend only on $K$ and $L$ and the indices $i$ and $n$.
In particular, if we fix two convex sets $K$ and $L$ then
the function $f(a,b) = V_n(aK + bL)$ is a homogeneous polynomial of degree $n$
in the non-negative variables $a, b$.  

Each mixed volume $V_{n-i, i}(K, L)$ is non-negative, continuous in
the entries $K$ and $L$, and monotonic with respect to set inclusion.
Note also that $V_{n-i, i}(K, K) = V_n(K)$.
If $\psi$ is an affine transformation whose linear component has determinant 
denoted $\det \psi$, then 
$V_{i, n-i}(\psi K, \psi L) = |\det \psi| \, V_{n-i, i}(K, L)$.
If $P$ is a polytope, then the mixed volume $V_{n-1, 1}(P, K)$
satisfies the classical ``base-height" formula 
\begin{equation}
V_{n-1, 1}(P, K) = \frac{1}{n} \sum_{u \perp \partial P} h_K(u) V_{n-1}(P^u),
\label{polyvol}
\end{equation}
where this sum is finite, taken over all outer normals $u$ to the {\em facets} 
on the boundary $\partial P$.
These and many other properties of convex bodies and mixed volumes 
are described in detail in each of \cite{Bonn2,red,Webster}.

The {\em Brunn-Minkowski inequality} asserts that,
for $0 \leq \lambda \leq 1$,
\begin{equation}
V_n((1-\lambda)K + \lambda L)^{1/n} \geq (1 - \lambda)V_n(K)^{1/n} + \lambda V_n(L)^{1/n}.
\label{bmcc}
\end{equation}
If $K$ and $L$ have interior, then equality holds in~(\ref{bmcc}) if and only if $K$ and $L$
are homothetic; that is, 
iff there exist $a \in \RR$ and $x \in \RR^n$ such that $L = aK + x$.   
On combining~(\ref{bmcc}) with Steiner's formula~(\ref{steinform}) one obtains the {\em Minkowski mixed volume inequality: }
\begin{equation}
V_{n-1,1}(K,L)^n \geq V_n(K)^{n-1} V_n(L),
\label{mmv}
\end{equation}
with the same equality conditions as in~(\ref{bmcc}).  See, for example, any of 
\cite{Bonn2,Gard-BM,red,Webster}.

If $K \in \KK_n$ has non-empty interior, define the {\em surface area measure} $S_K$ on
the $(n-1)$-dimensional unit sphere $\SS^{n-1}$ as follows.  For $A \subseteq \SS^{n-1}$
denote by $K^A = \bigcup_{u \in A} K^u$, and define
$S_K(A) = \mathcal{H}_{n-1}(K^A)$, the $(n-1)$-dimensional Hausdorff measure of the
subset $K^A$ of the boundary of $K$.  (See \cite[p. 203]{red}.)

Note that, if $P$ is a polytope, then $S_P$ is a pointed measure concentrated 
at precisely those directions $u$ that are outer normals to the facets of $P$.

The measure $S_K$ is easily shown to satisfy the property that
\begin{equation}
\int_{\SS^{n-1}} u \; dS_K = \vec{o},
\label{surf}
\end{equation}
that is, the mass distribution on the sphere described by $S_{K}$ has center of mass
at the origin.  The identity~(\ref{polyvol}) can now be expressed in its more general form:
\begin{equation}
V_{n-1, 1}(K, L) = \frac{1}{n} \int_{\SS^{n-1}} h_L(u) \; dS_K(u),
\label{anyvol}
\end{equation}
for all convex bodies $K$ and $L$ such that $K$ has non-empty interior.  It follows 
from~(\ref{anyvol}) and the Minkowski linearity of the support function that, for $K,L,M \in \KK_n$ and $a,b \geq 0$,
\begin{equation}
V_{n-1, 1}(K, aL+bM) = aV_{n-1, 1}(K, L) + b V_{n-1, 1}(K, M).
\label{left}
\end{equation}
If $B$ is a unit Euclidean ball centered at the origin, then $h_B = 1$ in every direction, so that $n V_{n-1, 1}(K, B) =  S(K)$, the surface area of the convex body $K$.

Minkowski's Existence Theorem \cite[p. 125]{Bonn2}\cite[p. 390]{red} gives an 
important and useful converse to the identity~(\ref{surf}):  If $\mu$ is a non-negative measure on the unit sphere $\SS^{n-1}$ such that $\mu$ has center of mass at the origin, and if $\mu$ is not concentrated on any great (equatorial) $(n-1)$-subsphere, then $\mu = S_K$ for some $K \in \KK_n$.  Moreover, this convex body $K$ is {\em unique up to translation}.

Minkowski's Existence Theorem provides the framework for the following definition:
For $K, L \in\KK_n$ and $a,b \geq 0$, define the {\em Blaschke combination} 
$a \hspace{-1mm}\cdot\hspace{-1mm} K \, \bs \, b \hspace{-1mm}\cdot\hspace{-1mm} L$ 
to be the unique convex body (up to translation) such that
$$S_{a \cdot K \, \bs \, b \cdot L} = aS_K + bS_L.$$
Although the Blaschke sum $K \bs L$ is identical (up to translation) 
to the Minkowski sum $K+L$ for convex bodies
$K$ and $L$ in $\RR^2$, the two sums are
substantially different for bodies in $\RR^n$ where $n \geq 3$.  
Moreover, for dimension $n \geq 3$, the scalar multiplication 
$a \hspace{-1mm}\cdot\hspace{-1mm} K$ also differs from the usual 
scalar multiplication $aK$ used with Minkowski combinations.
Specifically, $a \hspace{-1mm}\cdot\hspace{-1mm} K = a^{\frac{1}{n-1}}K$, 
since surface area in $\RR^n$ 
is homogeneous of degree $n-1$.

It follows from~(\ref{anyvol}) that, for $K,L,M \in \KK_n$ and $a,b \geq 0$,
\begin{equation}
V_{n-1, 1}(a \hspace{-1mm} \cdot \hspace{-1mm} K \, \bs \, 
b \hspace{-1mm} \cdot \hspace{-1mm} L, M) = aV_{n-1, 1}(K, M) + b V_{n-1, 1}(L, M).
\label{right}
\end{equation}
Note the important difference between~(\ref{left}) and~(\ref{right}) for $n \geq 3$.

It is not difficult to show that every polytope is a Blaschke combination of a finite number of simplices,
while every centrally symmetric polytope is Blaschke combination of a finite number of parallelotopes
(i.e., affine images of cubes) \cite[p. 334]{grunbaum}.  A standard continuity argument (using the Minkowski Existence Theorem and the selection principle for convex bodies \cite[p. 50]{red}) then implies that every convex body
can be approximated (in the Hausdorff topology) by Blaschke combinations of simplices, while
every centrally symmetric convex body
can be approximated by Blaschke combinations of parallelotopes.

A brief and elegant discussion of Blaschke sums, their properties, 
and applications, can be also be found in \cite{Lut1988}.

\section{A counterexample}
\label{counter}

We will exhibit convex bodies $K$ and $L$ in $\RR^n$, such that $V_n(K) > V_n(L)$,
while the orthogonal projection $L_u$ contains a translate of
the corresponding projection $K_u$ for each unit direction $u$.

Note that a suitable disk and Reuleaux triangle provide a well-known counterexample in the 2-dimensional case.  This section provides examples for
bodies of dimension $n \geq 3$.

For $K \in \KK_n$, denote by $r_K$ the {\em inradius} of $K$; that is, the maximum radius taken over all Euclidean balls inside $K$.  Denote by $d_K$ the {\em minimal width} of $K$; that is, the minimum length taken over all orthogonal projections of $K$ onto lines through the origin.  The minimal width is also equal to the minimum distance between any two parallel support planes for $K$.

Let $\Delta$ denote the $n$-dimensional regular simplex having unit edge length.  
The following well-known statistics will be used in the construction that follows:

\begin{center}
\begin{tabular}{rlll}
$\tau_n$ &= \, volume of $\Delta$ &=  \,
$\displaystyle{\frac{ \sqrt{n+1}}{2^{n/2} \, n!}}$&\\[6mm]
$S(\Delta)$ &= \, surface area of $\Delta$ &=  \,
$\displaystyle{\frac{(n+1) \sqrt{n}}{2^{\frac{n-1}{2}} (n-1)!}}$ &= \,
$(n+1) \tau_{n-1}$ \\[6mm]
$r_\Delta$ &= \, inradius of $\Delta$ &=  \, $\displaystyle{\frac{1}{\sqrt{2n(n+1)}}}$
&=  \, $\displaystyle{\frac{n \tau_n}{S(\Delta)}}$\\[8mm]
\end{tabular}
\end{center}
and
\begin{equation}
d_\Delta = \hbox{ minimal width of } \Delta  = 
\left\{
\begin{array}{ll}
\displaystyle{\tfrac{2(n+1)}{\sqrt{n+2}}}\; r_\Delta & \\[6mm]
\displaystyle{2\sqrt{n}}\; r_\Delta & 
\end{array}
\right.
 = \left\{
\begin{array}{ll}
\displaystyle{\sqrt{\tfrac{2(n+1)}{n(n+2)}}} & \hbox{ if } n \hbox{ is even}\\[6mm]
\displaystyle{\sqrt{\tfrac{2}{n+1}}}\;  & \hbox{ if } n \hbox{ is odd}
\end{array}
\right.
\label{dsimp}
\end{equation}
See, for example, \cite[p. 86]{Bonn2}.

To construct and verify the counterexample it will be necessary to compare the minimal width and inradius of a regular simplex with those of its lower-dimensional projections.  
{\em Steinhagen's inequality} asserts that, for $K \in \KK_n$, \\
\begin{equation}
r_K \geq 
\left\{
\begin{array}{ll}
\displaystyle{\tfrac{\sqrt{n+2}}{2n+2}}\; d_K & \hbox{ if } n \hbox{ is even}\\[4mm]
\displaystyle{\tfrac{1}{2\sqrt{n}}}\; d_K & \hbox{ if } n \hbox{ is odd}
\end{array}
\right.
\label{steinhagen}
\end{equation}
A proof of~(\ref{steinhagen}) is given in \cite[p. 86]{Bonn2}.  
If $u$ is a unit vector, then the orthogonal
projection $\Delta_u$ satisfies $d_{\Delta_u} \geq d_\Delta$, where
$d_{\Delta_u}$ is now computed from within the $(n-1)$-dimensional subspace $u^{\perp}$.
Since
$\dim(\Delta_u) = n-1$ has parity opposite that of $n$, it follows 
from~(\ref{steinhagen}) that
$$r_{\Delta_u} \geq 
\left\{
\begin{array}{l}
\displaystyle{\tfrac{\sqrt{n+1}}{2n}}\; d_{\Delta_u} \\[4mm]
\displaystyle{\tfrac{1}{2\sqrt{n-1}}}\; d_{\Delta_u} 
\end{array}
\right.
\geq
\left\{
\begin{array}{ll}
\displaystyle{\tfrac{\sqrt{n+1}}{2n}}\; d_{\Delta} & \hbox{ if } n \hbox{ is odd}\\[4mm]
\displaystyle{\tfrac{1}{2\sqrt{n-1}}}\; d_{\Delta} & \hbox{ if } n \hbox{ is even}
\end{array}
\right.
$$
Combining this with~(\ref{dsimp}) yields
\begin{equation}
r_{\Delta_u} \geq 
\left\{
\begin{array}{ll}
\displaystyle{\tfrac{1}{n\sqrt{2}}} & \hbox{ if } n \hbox{ is odd} \\[4mm]
\displaystyle{\tfrac{\sqrt{n+1}}{\sqrt{2}\sqrt{n(n-1)(n+2)}}} & \hbox{ if } n \hbox{ is even}
\end{array}
\right\}
\; \geq \; \frac{1}{n\sqrt{2}}
\label{evodd}
\end{equation}\\

Let $B_n$ denote the $n$-dimensional 
Euclidean ball centered at the origin and having unit radius.
For $0 \leq \epsilon \leq 1$, denote 
$$K^{\epsilon} = \epsilon \Delta + \left(\tfrac{1-\epsilon}{n\sqrt{2}}\right) B_n.$$

\begin{proposition} For each unit vector $u$ in $\RR^n$,
there exists $v \in u^{\perp}$ such that
$$K^{\epsilon}_u + v \subseteq \Delta_u$$
\label{1}
\end{proposition}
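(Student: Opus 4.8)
The plan is to reduce the statement to the inradius estimate already established in~(\ref{evodd}), using the fact that orthogonal projection onto $u^\perp$ is Minkowski linear and sends the ball $B_n$ to the unit ball of $u^\perp$. Writing $B$ for this $(n-1)$-dimensional unit ball, projection of the Minkowski combination gives
$$K^{\epsilon}_u = \epsilon \Delta_u + \left(\tfrac{1-\epsilon}{n\sqrt 2}\right) B.$$
Thus the task is to produce $v \in u^{\perp}$ so that this set, translated by $v$, lies inside $\Delta_u$.

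The central observation is an absorption property of inscribed balls. Let $z$ denote the center of a largest ball inscribed in $\Delta_u$, so that $z + r_{\Delta_u} B \subseteq \Delta_u$, and hence $r_{\Delta_u} B \subseteq \Delta_u - z$. Since both $\Delta_u - z$ and the ball $r_{\Delta_u} B$ are contained in the convex set $\Delta_u - z$, every Minkowski combination of the two lies in $\Delta_u - z$ as well; in particular
$$\epsilon(\Delta_u - z) + (1-\epsilon)\, r_{\Delta_u} B \subseteq \Delta_u - z.$$
Rearranging the translation terms yields $\epsilon \Delta_u + (1-\epsilon)\, r_{\Delta_u} B \subseteq \Delta_u - (1-\epsilon) z$.

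To finish I would invoke~(\ref{evodd}), which asserts precisely that $r_{\Delta_u} \geq \frac{1}{n\sqrt 2}$, so that $\frac{1-\epsilon}{n\sqrt 2} \leq (1-\epsilon) r_{\Delta_u}$ and therefore $\left(\tfrac{1-\epsilon}{n\sqrt 2}\right) B \subseteq (1-\epsilon)\, r_{\Delta_u} B$. Combining this inclusion with the previous containment gives
$$K^{\epsilon}_u = \epsilon \Delta_u + \left(\tfrac{1-\epsilon}{n\sqrt 2}\right) B \subseteq \Delta_u - (1-\epsilon) z,$$
so that $v = (1-\epsilon) z$ does the job. Since $z$ is the incenter of $\Delta_u \subseteq u^{\perp}$, we have $z \in u^{\perp}$ and hence $v \in u^{\perp}$, as required.

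The only genuine content lies in the inradius bound~(\ref{evodd}), which is already in hand through Steinhagen's inequality; the remaining absorption step is a short convexity argument. The main thing to get right is the choice of translate $v = (1-\epsilon) z$ together with the verification that the scalar $\frac{1}{n\sqrt 2}$ built into the definition of $K^{\epsilon}$ matches the worst-case inradius lower bound exactly. I expect no serious obstacle beyond this bookkeeping, since the body $K^{\epsilon}$ was evidently engineered precisely so that its ball summand has radius equal to the smallest possible inradius of the projections $\Delta_u$.
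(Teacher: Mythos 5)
Your proof is correct and follows essentially the same route as the paper: reduce to the inradius bound~(\ref{evodd}) and then absorb the ball summand into $\Delta_u$ via the convexity identity $\epsilon \Delta_u + (1-\epsilon)\Delta_u = \Delta_u$. The only cosmetic difference is that you pass through the full inscribed ball $z + r_{\Delta_u}B$ and then shrink, whereas the paper directly picks $w$ with $\tfrac{1}{n\sqrt{2}}B_{n-1} \subseteq \Delta_u - w$; the resulting translate $v=(1-\epsilon)z$ matches the paper's $v=(1-\epsilon)w$.
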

\noindent
In other words, each shadow of the simplex $\Delta$ 
contains a translate of the corresponding shadow of $K^\epsilon$.
\begin{proof} Let $u$ be a unit vector in $\RR^n$.
Since $\frac{1}{n \sqrt{2}} \leq r_{\Delta_u}$, there exists $w \in u^\perp$
such that
$$\tfrac{1}{n \sqrt{2}} B_{n-1} \subseteq \Delta_u - w.$$
Hence,
$$K^{\epsilon}_u  = \epsilon \Delta_u + (1 - \epsilon) \tfrac{1}{n\sqrt{2}} B_{n-1}
\subseteq
\epsilon \Delta_u + (1 - \epsilon)(\Delta_u - w) = 
\Delta_u + (\epsilon-1)w.$$
Setting $v = (1-\epsilon) w$, we have 
$K^{\epsilon}_u + v \subseteq \Delta_u$.
\end{proof}

Next, recall from Steiner's formula~(\ref{steinform}) that if $K$ is a convex body in $\RR^n$ then
\begin{equation}
V_n(\epsilon K + \alpha B_n) = 
\epsilon^n V_n(K) + \epsilon^{n-1} \alpha S(K) + \alpha^2 f(\alpha, \epsilon),
\label{stein-ball}
\end{equation}
where $f(\alpha, \epsilon)$ is a polynomial in $\alpha$ and $\epsilon$ having non-negative coefficients.

\begin{proposition} If $1-\epsilon > 0$ is sufficiently small, then
$V_n(K^\epsilon) > V_n(\Delta)$. 
\label{2}
\end{proposition}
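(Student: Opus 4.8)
The plan is to feed $K = \Delta$ and $\alpha = \tfrac{1-\epsilon}{n\sqrt{2}}$ into the Steiner expansion~(\ref{stein-ball}) and track the behaviour of $V_n(K^\epsilon)$ as $\epsilon \to 1^-$. Writing $\delta = 1-\epsilon$, the value $\epsilon = 1$ gives $K^1 = \Delta$, so $V_n(K^\epsilon) = V_n(\Delta)$ when $\delta = 0$; the goal is to show that nudging $\epsilon$ just below $1$ strictly increases the volume. I would establish this by proving that the coefficient of the linear term in $\delta$ in the expansion of $V_n(K^\epsilon) - V_n(\Delta)$ is strictly positive.

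From~(\ref{stein-ball}), with $\alpha = \tfrac{\delta}{n\sqrt{2}}$,
$$V_n(K^\epsilon) = \epsilon^n V_n(\Delta) + \epsilon^{n-1}\,\alpha\,S(\Delta) + \alpha^2 f(\alpha,\epsilon).$$
Since $f$ is a polynomial and $\alpha = O(\delta)$, the final term is $O(\delta^2)$. Expanding $\epsilon^n = (1-\delta)^n = 1 - n\delta + O(\delta^2)$ and $\epsilon^{n-1} = 1 + O(\delta)$ then yields
$$V_n(K^\epsilon) - V_n(\Delta) = \delta\left[\frac{S(\Delta)}{n\sqrt{2}} - n\,V_n(\Delta)\right] + O(\delta^2).$$
The two competing first-order effects are the volume lost by scaling $\Delta$ down by the factor $\epsilon < 1$ (the $-n\,V_n(\Delta)$ term) and the volume gained by thickening with the small ball (the surface-area term); the claim reduces to showing the latter wins.

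For this it suffices to check that the bracketed linear coefficient is positive. Using the tabulated statistics, a short computation gives $S(\Delta)/V_n(\Delta) = n\sqrt{2n(n+1)}$, consistent with the listed identity $r_\Delta = n\,V_n(\Delta)/S(\Delta)$. Hence
$$\frac{S(\Delta)}{n\sqrt{2}} - n\,V_n(\Delta) = V_n(\Delta)\left(\sqrt{n(n+1)} - n\right) > 0,$$
since $n(n+1) > n^2$. Therefore $V_n(K^\epsilon) - V_n(\Delta) = \delta\,V_n(\Delta)\bigl(\sqrt{n(n+1)} - n\bigr) + O(\delta^2) > 0$ for all sufficiently small $\delta > 0$, that is, for $\epsilon$ sufficiently close to (but below) $1$, which is exactly the assertion.

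I expect no serious obstacle: once the Steiner expansion isolates the surface-area contribution as the leading effect, the result hinges only on the elementary inequality $\sqrt{n(n+1)} > n$. The one point requiring care is the bookkeeping of the $O(\delta^2)$ remainder --- it must be confirmed that both the binomial expansion of $\epsilon^n$ and the term $\alpha^2 f(\alpha,\epsilon)$ genuinely contribute only at second order in $\delta$, so that the sign of the difference for small $\delta$ is controlled by the positive linear coefficient. Combined with Proposition~\ref{1}, this will furnish the desired counterexample, with $L = \Delta$ covering every shadow of $K = K^\epsilon$ while $V_n(K^\epsilon) > V_n(\Delta)$.
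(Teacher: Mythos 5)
Your proof is correct and takes essentially the same approach as the paper: both apply the Steiner expansion~(\ref{stein-ball}) with $\alpha = \tfrac{1-\epsilon}{n\sqrt{2}}$ and reduce the claim to the inequality $\sqrt{n(n+1)} > n$; the paper divides through by $1-\epsilon$ and passes to the limit $\epsilon \to 1$, which is the same first-order-in-$(1-\epsilon)$ comparison you carry out via Taylor expansion.
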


\begin{proof} We need to show that $V_n(K^{\epsilon}) - V_n(\Delta) > 0$.  Applying~(\ref{stein-ball}) 
yields
\begin{eqnarray*}
V_n(K^{\epsilon}) - V_n(\Delta) 
&=& V_n \left( \epsilon \Delta + \tfrac{1-\epsilon}{n \sqrt{2}}B_n \right) - V_n(\Delta)\\
&=& (\epsilon^n - 1) V_n(\Delta) + 
\epsilon^{n-1} \left(\tfrac{1-\epsilon}{n \sqrt{2}} \right) S(\Delta) + 
\left(\tfrac{1-\epsilon}{n \sqrt{2}} \right)^2 f_n(\epsilon) \\
&=& (\epsilon^n - 1) \left( \tfrac{\sqrt{n+1}}{2^{n/2} \, n!}\right) + 
\epsilon^{n-1} \left( \tfrac{1-\epsilon}{n \sqrt{2}} \right) 
\left( \tfrac{(n+1)\sqrt{n}}{2^{\frac{n-1}{2}} (n-1)!} \right) + 
\left( \tfrac{1-\epsilon}{n \sqrt{2}} \right)^2 f_n(\epsilon) \\
&=&(\epsilon^n - 1) \left( \tfrac{\sqrt{n+1}}{2^{n/2} \, n!}\right) + 
\epsilon^{n-1}(1-\epsilon)
\left( \tfrac{(n+1)\sqrt{n}}{2^{n/2} \, n!} \right)+ 
\left(\tfrac{1-\epsilon}{n \sqrt{2}} \right)^2 f_n(\epsilon) \\
\end{eqnarray*}
where $f_n(\epsilon)$ is a polynomial in $\epsilon$.

It follows that $V_n(K^{\epsilon}) - V_n(\Delta) > 0$ if and only if
$$\epsilon^{n-1}(1-\epsilon)
\left( \tfrac{(n+1)\sqrt{n}}{2^{n/2} \, n!} \right)+ 
\left(\tfrac{(1-\epsilon)^2}{2n^2} \right) f_n(\epsilon) 
> (1-\epsilon^n) \left( \tfrac{\sqrt{n+1}}{2^{n/2} \, n!} \right)
$$
if and only if
\begin{equation}
\epsilon^{n-1} \sqrt{n(n+1)} + 
2^{n/2} \, n! \left(\tfrac{1-\epsilon}{2n^2 \sqrt{n+1}} \right) f_n(\epsilon) 
> (1+ \epsilon + \epsilon^2 + \cdots + \epsilon^{n-1}) 
\label{bleep}
\end{equation}
As $\epsilon \rightarrow 1$, the left-hand side of~(\ref{bleep}) approaches $\sqrt{n(n+1)}$,
while the right-hand side approaches $n$, a strictly smaller value for all
positive integers $n$.  It follows that
$V_n(K_\epsilon) > V_n(\Delta)$ for $\epsilon$ sufficiently close to 1.
\end{proof}

Propositions~\ref{1} and~\ref{2} imply that if $0 < \epsilon < 1$ is sufficiently close to $1$,
then every shadow of $\Delta$ contains a translate of the corresponding
shadow of the body $K^\epsilon$, even though $V_n(K^\epsilon) > V_n(\Delta)$.

More precise conditions on admissible values of $\epsilon$ depend on $n$.  
For the case $n=3$ the inequalities used in the proof of Proposition~\ref{2},
along with some additional very crude estimates, imply that $\epsilon = 0.9$ gives a specific
counterexample.  In other words, the 3-dimensional convex bodies:
$$K = \tfrac{9}{10}\Delta_3 + \tfrac{1}{30\sqrt{2}}B_3 
\;\;\;\; \hbox { and } \;\;\;\; \Delta_3$$
have the property that each shadow of $K$ can be covered by a translate of 
the corresponding shadow of the unit regular tetrahedron $\Delta_3$, 
even though $K$ has {\em greater} 
volume\footnote{A more precise calculation yields $V_3(K) \approx 0.122$ and $V_3(\Delta_3) \approx 0.118$.} 
than $\Delta_3$.

At this point one might ask whether suitable conditions on either of the bodies $K$ and $L$ might guarantee that covering shadows implies larger volume.  It is not difficult to show that if $L$ is centrally symmetric, then
$L$ will have greater volume than $K$ when the shadows of $L$ can cover those of $K$.  To see this, suppose that $L=-L$.
If $K_u \subseteq L_u + v$, then 
$-K_u \subseteq -L_u - v = L_u - v$
so that
$$K_u + (-K_u) \subseteq L_u + v + L_u - v = L_u$$
for every direction $u$.  It follows that $K+(-K) \subseteq L+L = 2L$.   
Monotonicity of volume and the Brunn-Minkowski Inequality~(\ref{bmcc}) then imply that
$$V_n(L)^{1/n} \geq V_n \left(\tfrac{1}{2} K + \tfrac{1}{2}(-K) \right)^{1/n} \geq 
\tfrac{1}{2} V_n(K)^{1/n} + \tfrac{1}{2}V_n(-K)^{1/n} = V_n(K)^{1/n},$$
so that $V_n(L) \geq V_n(K)$.

This volume inequality also turns out hold when $L$ is chosen from a much larger family of bodies, 
to be described in Section~\ref{pos}.

\section{A more general counterexample}
\label{gencounter}

The counterexample of Section~\ref{counter} will now be generalized.  
If $\xi$ is a $k$-dimensional subspace
of $\RR^n$, denote by $K_\xi$ the orthogonal projection 
of a set $K \subseteq \RR^n$ to the subspace $\xi$.  For $0 \leq m \leq n$ denote by $V_m(K)$ the
$m$th intrinsic volume of $K$.   The intrinsic volume functional $V_m$ restricts to $m$-dimensional volume on
$m$-dimensional convex sets and is proportional to the mean $m$-volume of the 
$m$-dimensional orthogonal projections of $K$ for more general $K \in \KK_n$.  See, for example,
\cite{Lincee} or \cite[p. 210]{red}.

The following lemma is helpful for extending some low dimensional constructions to higher dimension.
\begin{lemma}  
Suppose that $K$ and $L$ are compact convex sets in $\RR^j \subseteq \RR^n$, where $j \leq n$,
and suppose that
$L_\xi$ can cover $K_\xi$ for all $i$-subspaces $\xi$ of $\RR^j$.  
Then 
$L_\xi$ can cover $K_\xi$ for all $i$-subspaces  $\xi$ of $\RR^{n}$.
\label{embedproj}
\end{lemma}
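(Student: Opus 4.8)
The plan is to reduce every $i$-dimensional projection taking place in the ambient space $\RR^n$ to a projection that takes place entirely inside $\RR^j$, where the hypothesis can be applied. Write $\Pi_\zeta$ for the orthogonal projection of $\RR^n$ onto a subspace $\zeta$, so that $K_\zeta = \Pi_\zeta(K)$. Fix an $i$-subspace $\xi$ of $\RR^n$. Since $K, L \subseteq \RR^j$, the projections $K_\xi$ and $L_\xi$ are unaffected by first restricting attention to $\RR^j$; that is, $K_\xi = A(K)$ and $L_\xi = A(L)$, where $A := \Pi_\xi|_{\RR^j}$ is the linear map carrying $\RR^j$ into $\xi$. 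The whole problem is therefore governed by this single linear map $A$, and I would analyze how $A$ factors through projections internal to $\RR^j$.

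First I would locate the $\RR^j$-internal subspace on which $A$ is faithful. Let $\zeta \subseteq \RR^j$ be the orthogonal complement, taken within $\RR^j$, of the kernel $\ker A = \RR^j \cap \xi^\perp$. Then $\Pi_\zeta$ acts as the identity on $\zeta$ and kills $\ker A$, so $A = A \circ \Pi_\zeta$ on $\RR^j$, which gives $K_\xi = A(K_\zeta)$ and $L_\xi = A(L_\zeta)$. Moreover $\dim \zeta$ equals the rank of $A$, which is at most $\min(i,j) \le i$. Since $\dim \zeta \le i \le j$, I can extend $\zeta$ to an $i$-dimensional subspace $\xi_0$ of $\RR^j$. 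The hypothesis, applied to the genuine $i$-subspace $\xi_0 \subseteq \RR^j$, supplies a vector $w$ with $K_{\xi_0} + w \subseteq L_{\xi_0}$. Projecting this inclusion onto $\zeta$ (noting $\zeta \subseteq \xi_0$, so $(K_{\xi_0})_\zeta = K_\zeta$ and likewise for $L$) yields $K_\zeta + \Pi_\zeta(w) \subseteq L_\zeta$ inside $\RR^j$.

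Finally I would transport this covering from $\zeta$ back to $\xi$ by applying $A$. Because $A$ is linear, $A(K_\zeta + \Pi_\zeta(w)) = A(K_\zeta) + A(\Pi_\zeta(w)) = K_\xi + v$, where $v := A(\Pi_\zeta(w)) \in \xi$, and the inclusion $K_\zeta + \Pi_\zeta(w) \subseteq L_\zeta$ passes through $A$ to give $K_\xi + v \subseteq L_\xi$, as required. The step I expect to be the main obstacle — or at least the point most in need of care — is this transport under $A$: the restriction $A|_\zeta$ is in general not an isometry (it shrinks lengths unless $\zeta \subseteq \xi$), so it distorts the shapes $K_\zeta$ and $L_\zeta$. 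The saving observation is that the relation ``contains a translate of'' is preserved by every linear map, isometry or not, precisely because a linear map sends a translate to a translate; hence the distortion is irrelevant to covering. A secondary subtlety, handled above by the extend-and-project step, is that $\xi$ may sit in special position relative to $\RR^j$, forcing $\dim \zeta < i$; this is exactly why the argument must allow passage to the possibly lower-dimensional $\zeta$ rather than insisting on a full $i$-dimensional internal projection.
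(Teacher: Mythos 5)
Your proof is correct, but it follows a genuinely different route from the paper's. The paper argues by induction on the ambient dimension, one step at a time from $\RR^j$ to $\RR^{j+1}$: for a \emph{generic} $i$-subspace $\eta$ of $\RR^{j+1}$ (one with $\dim(\eta^\perp\cap\RR^j)=j-i$) it identifies the internal $i$-subspace $\xi$ as the orthogonal complement of $\eta^\perp\cap\RR^j$ inside $\RR^j$, observes that the difference vectors witnessing $(K+v)_\xi\subseteq L_\xi$ automatically lie in $\eta^\perp$, and then extends from the generic (dense) family to all $i$-subspaces by a limiting argument. Your argument instead handles an arbitrary $i$-subspace $\xi$ of $\RR^n$ in one shot by factoring $\Pi_\xi|_{\RR^j}=A\circ\Pi_\zeta$ through the internal subspace $\zeta=(\RR^j\cap\xi^\perp)^{\perp}\cap\RR^j$, which may have dimension strictly less than $i$; you then extend $\zeta$ to an internal $i$-subspace, invoke the hypothesis there, and push the covering forward through the linear map $A$, using the (correct) observation that ``contains a translate of'' is preserved by arbitrary linear maps. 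The two proofs share the same geometric core --- the relevant internal subspace is the orthocomplement of the kernel of the restricted projection --- but your version buys two things: it needs no induction through intermediate dimensions, and it dispenses with the genericity-plus-density step (which in the paper tacitly requires a compactness argument to show the covering relation survives limits of subspaces). The paper's version, in exchange, stays entirely with orthogonal projections and avoids reasoning about non-isometric linear images of the bodies. Both are sound.
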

In other words, if the $i$-dimensional shadows of $L$ can cover those of $K$ in $\RR^j$, then
this covering relation is preserved when $K$ and $L$ are embedded together 
(along with $\RR^j$) in the higher dimensional
space $\RR^n$.  
\begin{proof} Suppose that $K$ and $L$ are compact convex sets in $\RR^j$,
and suppose that $L_\xi$ can cover $K_\xi$ for all $i$-subspaces $\xi$ of $\RR^j$.

Suppose that $\eta$ is an $i$-dimensional subspace of $\RR^{j+1}$.  Then $\dim(\eta^{\perp}) = j-i+1$, 
and $\dim(\eta^{\perp} \cap \RR^j) = j-i$ for generic choices of $\eta$.  Assume $\eta$ is chosen this way.

Let $\xi$ denote the orthogonal complement of $\eta^{\perp} \cap \RR^j$ taken within $\RR^j$.  Since
$\dim(\xi) = i$, there exists $v \in \RR^j$ such that $(K + v)_\xi \subseteq L_\xi$, by the covering assumption
for $K$ and $L$ in $\RR^j$.  This means that, for each $x \in K + v$, there exists $y \in L$ such that
$x-y$ is orthogonal to $\xi$.  It follows from the construction of $\xi$ that $x-y \in \eta^{\perp}$.

Hence, for all $x \in K + v$, there exists $y \in L$ such that $x-y \in \eta^\perp$.  This implies that
$K_\eta + v_\eta \subseteq L_\eta$.

We have shown that $L_\eta$ can cover $K_\eta$ for all $i$-subspaces $\eta$ of $\RR^{j+1}$
such that $\dim(\eta^{\perp} \cap \RR^j) = j-i$.  Since this is a dense family of $i$-subspaces,
the lemma follows more generally for all $i$-subspaces of $\RR^{j+1}$.  By a suitable iteration of this argument, the lemma
then follows for $i$-subspaces of $\RR^{n}$, for any $n > j$.
\end{proof}

We can now generalize the counterexample of Section~\ref{counter}.

\begin{theorem} Let $n \geq 3$ and $1 \leq k < n$.
There exist convex bodies $K, L \in \KK_n$ such that $L_\xi$ can cover $K_\xi$ for all 
$k$-dimensional subspaces $\xi$, while 
$V_m(K) > V_m(L)$ for all $m > k$.
\label{counters}
\end{theorem}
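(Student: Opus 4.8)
The plan is to reduce to the $(k+1)$-dimensional counterexample already built in Section~\ref{counter} and then ``thicken'' it to a full-dimensional body in $\RR^n$ by taking a Cartesian product with a small auxiliary factor. Working in $\RR^{k+1}$, I would apply Propositions~\ref{1} and~\ref{2} with $n$ replaced by $k+1$: writing $A = \epsilon\Delta + \frac{1-\epsilon}{(k+1)\sqrt2}B_{k+1}$ and $B = \Delta$ for the regular $(k+1)$-simplex $\Delta$, Proposition~\ref{1} gives that every hyperplane shadow (that is, every $k$-dimensional shadow) of $A$ is covered by the corresponding shadow of $B$, while Proposition~\ref{2} gives, for $\epsilon$ fixed close enough to $1$, that $V_{k+1}(A) > V_{k+1}(B)$. (When $k+1 = 2$ this is the classical disk-and-Reuleaux example.) The difficulty is that $A$ and $B$ live in a $(k+1)$-dimensional subspace, so their higher intrinsic volumes $V_m$ vanish for $m > k+1$ and cannot satisfy a strict inequality; to obtain $V_m(K) > V_m(L)$ for every $m > k$ one must pass to genuinely $n$-dimensional bodies.

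To this end, fix any full-dimensional convex body $Q \subseteq \RR^{n-k-1}$ and, for $\lambda > 0$, set $K_\lambda = A \times \lambda Q$ and $L_\lambda = B \times \lambda Q$, regarded as bodies in $\RR^{k+1}\times\RR^{n-k-1} = \RR^n$. (When $k = n-1$ the factor $Q$ is absent and we recover the construction of Section~\ref{counter} verbatim.) First I would verify the covering condition, which holds for every $\lambda$. Since orthogonal projection $P_\xi$ onto a $k$-subspace $\xi$ is linear and $K_\lambda = (A\times\{0\}) + (\{0\}\times\lambda Q)$ is a Minkowski sum, one has $(K_\lambda)_\xi = (A\times\{0\})_\xi + (\{0\}\times\lambda Q)_\xi$, and likewise for $L_\lambda$. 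Proposition~\ref{1} together with the embedding Lemma~\ref{embedproj} (applied with $i = k$, $j = k+1$) provides a translation $v$ with $(A\times\{0\})_\xi \subseteq (B\times\{0\})_\xi + v$; adding the common summand $(\{0\}\times\lambda Q)_\xi$ to both sides yields $(K_\lambda)_\xi \subseteq (L_\lambda)_\xi + v$. Thus $L_\lambda$ covers $K_\lambda$ in every $k$-dimensional shadow.

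It remains to arrange the intrinsic-volume inequalities, and this is where the single strict inequality $V_{k+1}(A) > V_{k+1}(B)$ must be leveraged. Using the standard product formula $V_m(X\times Y) = \sum_{i+j=m} V_i(X)V_j(Y)$ and the homogeneity $V_j(\lambda Q) = \lambda^j V_j(Q)$, I would write
$$V_m(K_\lambda) - V_m(L_\lambda) = \sum_{i+j=m}\bigl(V_i(A)-V_i(B)\bigr)\,V_j(Q)\,\lambda^{j}.$$
For each $m$ with $k+1 \le m \le n$, the unique term of lowest order in $\lambda$ is the one forcing $i = k+1$, the top dimension of $A$ and $B$, namely $j = m-k-1$, with coefficient $\bigl(V_{k+1}(A)-V_{k+1}(B)\bigr)V_{m-k-1}(Q) > 0$; positivity comes from $V_{k+1}(A) > V_{k+1}(B)$ and the full-dimensionality of $Q$ (so $V_{m-k-1}(Q) > 0$). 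All the uncontrolled lower-order differences $V_i(A) - V_i(B)$ with $i \le k$ are attached to strictly higher powers of $\lambda$. Hence for each such $m$ there is $\lambda_m > 0$ making the difference positive on $(0,\lambda_m)$; choosing $\lambda$ below the finite minimum $\min_{k < m \le n}\lambda_m$ yields $V_m(K_\lambda) > V_m(L_\lambda)$ simultaneously for all $m > k$. Setting $K = K_\lambda$ and $L = L_\lambda$ completes the construction.

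The main obstacle I anticipate is exactly this intrinsic-volume bookkeeping: the embedded $(k+1)$-dimensional example carries only one usable strict inequality, and the naive product has lower-order intrinsic-volume differences of unknown sign. The crux is the observation that shrinking the auxiliary factor $Q$ (letting $\lambda \to 0$) isolates the top-dimensional difference $V_{k+1}(A) - V_{k+1}(B)$ as the leading coefficient of every relevant intrinsic-volume difference, so that the lone inequality inherited from Section~\ref{counter} propagates to all $m > k$ at once. Verifying that the product genuinely preserves the covering relation—through linearity of projection and Lemma~\ref{embedproj}—is the other step demanding care.
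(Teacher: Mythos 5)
Your proposal is correct and follows essentially the same route as the paper: seed with the $(k+1)$-dimensional counterexample of Section~\ref{counter}, transfer the covering property via Lemma~\ref{embedproj} and the Minkowski-sum behavior of projections, and isolate the leading term $\bigl(V_{k+1}(A)-V_{k+1}(B)\bigr)V_{m-k-1}(Q)\lambda^{m-k-1}$ in the product expansion of $V_m$ as the auxiliary factor shrinks. The only cosmetic differences are that the paper uses a scaled unit cube where you allow an arbitrary full-dimensional $Q$, and it treats $k=1$ separately via a simplex and its difference body, whereas your uniform treatment seeded by the planar example works just as well.
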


Note that this theorem is already well-known for the case $k = 1$.  In that particular case,
the covering condition merely asserts that the width of $K$ in any direction is smaller
than or equal to the corresponding width of $L$.
The novel aspect of this result addresses the cases in which $2 \leq k < n$.

\begin{proof}  If $k = 1$, then let $K$ be an $n$-simplex, and let $M = \tfrac{1}{2}K + (-\tfrac{1}{2}K)$, the
difference body of $K$.  It then follows from the Minkowski mixed volume inequality~(\ref{mmv}) and
the classical mean projection formulas for intrinsic volumes \cite{Gard2006,Lincee,red} that 
$V_m(M) > V_m(K)$ for $m \geq 2$, while $K$ and $M$ have identical width in every direction.

Now suppose that $k \geq 2$.
Let $\hat{K}$ and $\hat{L}$ be chosen in $\KK_{k+1}$ so that
$\hat{L}_\xi$ can cover $\hat{K}_\xi$ for all $k$-subspaces $\xi$ of $\RR^{k+1}$, while
$V_{k+1}(\hat{K}) > V_{k+1}(\hat{L})$.  (One could follow the explicit 
construction given in Section~\ref{counter}, for example.) 

If $n = k+1$, we are done.  If $n > k+1$, 
embed $\hat{K}$ and $\hat{L}$ in $\RR^n$ via the usual coordinate embedding of $\RR^{k+1}$ in $\RR^n$.  Then 
$\hat{L}_\xi$ can cover $\hat{K}_\xi$ for all $k$-subspaces $\xi$ of $\RR^{n}$, by Lemma~\ref{embedproj}.

Let $C$ denote the unit cube in $\RR^{n-k-1}$ with edges parallel to the standard axes in the orthogonal
complement to $\RR^{k+1}$ in $\RR^n$. 
Let $K = \hat{K} + \epsilon C$ and $L = \hat{L} + \epsilon C$.  Then
$L_\xi$ can cover $K_\xi$ for all 
$k$-dimensional subspaces $\xi$ once again, since
$K_\xi = \hat{K}_\xi + \epsilon C_\xi$, and similarly for $L$.

Moreover, if $m \geq k+1$ then
$$
V_m(K) \;=\; V_m(\hat{K} + \epsilon C) \;  = \sum_{i+j = m} V_i(\hat{K})V_j(C) \epsilon^j\\
$$
by the Cartesian product formula for intrinsic volumes \cite[p. 130]{Lincee}.  Hence,
\begin{align*}
V_m(K) & = \; \sum_{i=0}^{k+1} V_i(\hat{K})V_{m-i}(C) \epsilon^{m-i}\\
& = \; \sum_{i=0}^{k+1} \binom{n-k-1}{m-i} V_i(\hat{K}) \epsilon^{m-i}\\
& = \; \epsilon^{m-k-1} \binom{n-k-1}{m-k-1} V_{k+1}(\hat{K}) + f_K(\epsilon)
\end{align*}
where $f_K(\epsilon)$ is a polynomial in $\epsilon$ composed of monomials having 
degree greater than $m-k-1$.  A similar formula holds
for $V_m(L)$.  Therefore,
$$V_m(K) - V_m(L) = \epsilon^{m-k-1} \binom{n-k-1}{m-k-1} 
\left( V_{k+1}(\hat{K})-V_{k+1}(\hat{L}) \right)
+ (f_K(\epsilon) - f_L(\epsilon)),$$
where $f_K(\epsilon) - f_L(\epsilon)$ 
is a polynomial in $\epsilon$ composed of monomials having 
degree greater than $m-k-1$.
Since the lowest degree coefficient of
the polynomial formula for $V_m(K) - V_m(L)$ is positive, we have $V_m(K) - V_m(L) > 0$ when $\epsilon > 0$
is sufficiently small.
\end{proof}

\section{Cylinders and shadow covering}
\label{sec-cover}

Let $K \in \KK_n$ and suppose that $P \in \KK_n$ is a polytope.  
A {\em facet} of $P$ is a face (support set) of $P$ having dimension $n-1$.
We say that $P$ {\em circumscribes} $K$ if $K \subseteq P$  and $K$ also meets every facet of $P$.
\begin{lemma}[Circumscribing Lemma]  Let $K, P \in \KK_n$, where $P$ is a polytope.  
If $P$ circumscribes $K$ then
\begin{equation}
V_{n-1,1}(P, K) = V_n(P).
\label{circumcond}
\end{equation}
If we are given that $K \subseteq P$, then~(\ref{circumcond}) holds if and only if $P$ circumscribes $K$.
\label{clem}
\end{lemma}

\begin{proof}  
If $K \subseteq P$ and if
$K$ meets every facet of $P$, then $h_K(u) = h_P(u)$ whenever the direction $u$ is normal to a facet of $P$.
Since $P$ is a polytope, the mixed volume formula~(\ref{polyvol}) yields
$$V_{n-1,1}(P, K) = \frac{1}{n} \sum_{u \perp \partial P} h_K(u) V_{n-1}(P^u) = 
\frac{1}{n} \sum_{u \perp \partial P} h_P(u) V_{n-1}(P^u) = V(P).
$$

Conversely,
if we are given that $K \subseteq P$, then $h_K(u) \leq h_P(u)$, with equality for all facet normals $u$ if and only if
$P$ circumscribes $K$, so that~(\ref{circumcond}) holds if and only if $P$ circumscribes $K$. 
\end{proof}

The case in which $P$ is a simplex is especially important, 
because of the following theorem of Lutwak \cite{Lut-contain} (see also \cite{Lincee}), 
itself a consequence of Helly's theorem.
\begin{theorem}[Lutwak's Containment Theorem]  Let $K, L \in \KK_n$.  Suppose that, for every simplex $\Delta$ such 
that $L \subseteq \Delta$, there is a vector $v_\Delta \in \RR^n$ such that $K + v_\Delta \subseteq \Delta$.  
Then there is a vector
$v \in \RR^n$ such that $K +v \subseteq L$.
\label{lcon}
\end{theorem}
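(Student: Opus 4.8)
The plan is to recast the problem in ``translation space.'' For $M \in \KK_n$ write $T(M) = \{v \in \RR^n : K + v \subseteq M\}$, and note that $K + v \subseteq M$ holds if and only if $h_K(u) + v \cdot u \leq h_M(u)$ for every direction $u$. Thus $T(M) = \bigcap_{u \in \SS^{n-1}} T_u$, where $T_u = \{v \in \RR^n : v \cdot u \leq h_M(u) - h_K(u)\}$ is a closed half-space, so $T(M)$ is a closed convex set. The goal is exactly to show $T(L) \neq \emptyset$, i.e.\ that an intersection of infinitely many half-spaces in $\RR^n$ is nonempty. This is the natural arena for Helly's theorem, and the hypothesis about simplices containing $L$ is precisely designed to supply the $(n{+}1)$-wise intersection data that Helly requires (each simplex being an intersection of $n+1$ half-spaces).

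First I would install compactness so that the infinite Helly argument is legitimate. We may assume $L$ has nonempty interior, the general case following by restriction to the affine hull of $L$ and approximation. Fix a simplex $\Delta_0$ circumscribing $L$, with facet normals $u_0, \ldots, u_n$; by the hypothesis $C_0 := T(\Delta_0) = \bigcap_{i=0}^n T_{u_i}$ is nonempty, and it is compact since $\Delta_0$ is bounded. The family $\{T_u \cap C_0 : u \in \SS^{n-1}\}$ then consists of compact convex sets, and $\bigcap_u (T_u \cap C_0) = C_0 \cap T(L)$; so by the finite intersection property it suffices to show that every finite subfamily meets. Any finite subfamily intersection is a finite intersection of the half-spaces $T_u$ (together with the fixed $T_{u_0}, \ldots, T_{u_n}$), and for finitely many half-spaces in $\RR^n$ Helly's theorem reduces nonemptiness to checking that every $n+1$ of them have a common point. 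Thus the whole problem collapses to a statement about $(n{+}1)$-tuples of directions.

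The generic $(n{+}1)$-tuples are where the hypothesis enters cleanly: if $y_0, \ldots, y_n$ positively span $\RR^n$, then $\Delta := \bigcap_{l} \{x : x \cdot y_l \leq h_L(y_l)\}$ is a bounded, full-dimensional intersection of $n+1$ half-spaces, hence an $n$-simplex, and it contains $L$; applying the hypothesis yields $v_\Delta$ with $K + v_\Delta \subseteq \Delta \subseteq \{x : x \cdot y_l \leq h_L(y_l)\}$, so $v_\Delta \in \bigcap_l T_{y_l}$, as needed. I expect the \textbf{main obstacle} to be the degenerate tuples whose normals fail to positively span $\RR^n$: these bound no simplex, so the hypothesis does not apply directly, and indeed (already for a pair $y_0 = u$, $y_1 = -u$) the intersection $\bigcap_l T_{y_l}$ can be empty precisely when $K$ is wider than $L$ in direction $u$ --- a configuration the hypothesis must, and does, forbid. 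To dispatch these I would pass to a minimal infeasible subsystem (Farkas): any obstruction is a balanced, positively spanning subcollection $\{y_l\}_{l \in I}$ of some proper subspace $V$, hence the facet normals of a simplex $\Sigma \subseteq V$ circumscribing the projection $L_V$, and the putative certificate reads $\sum_{l \in I} \lambda_l h_K(y_l) > \sum_{l \in I} \lambda_l h_L(y_l)$ with $\sum_{l \in I} \lambda_l y_l = 0$. I would refute it by applying the hypothesis to genuine $n$-simplices $\Delta^{(t)} \supseteq L$ that are tight on $L$ in the directions $y_l$ ($l \in I$) while stretching to infinity in $V^\perp$; from $K + v_t \subseteq \Delta^{(t)}$ one gets $\sum_{l \in I} \lambda_l h_K(y_l) \leq \sum_{l \in I} \lambda_l h_{\Delta^{(t)}}(y_l)$, and letting $t \to \infty$ (so $h_{\Delta^{(t)}}(y_l) \to h_L(y_l)$) contradicts the certificate. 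Once every $(n{+}1)$-tuple is cleared, Helly and the finite intersection property give $C_0 \cap T(L) \neq \emptyset$, producing the single vector $v$ with $K + v \subseteq L$.
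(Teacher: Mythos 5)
First, a point of reference: the paper does not prove this statement; it is imported from Lutwak \cite{Lut-contain} (see also \cite{Lincee}) with only the remark that it follows from Helly's theorem, so there is no in-paper proof to compare against. Your overall strategy --- pass to translation space $T(M)=\{v: K+v\subseteq M\}$, gain compactness from one circumscribing simplex, use the finite intersection property and Helly to reduce everything to $(n{+}1)$-tuples of directions, dispatch positively spanning tuples directly from the hypothesis, and reduce the degenerate tuples to a Farkas certificate supported on the facet normals of a simplex in a proper subspace $V$ --- is the standard route to this theorem and is sound. The generic case and all of the reductions (including the fact that a minimal infeasible subsystem is carried by $d+1$ directions positively spanning a $d$-dimensional $V$, i.e.\ by simplex normals) are correct as written.

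The gap is the last step: the existence of $n$-simplices $\Delta^{(t)}\supseteq L$ with $h_{\Delta^{(t)}}(y_l)\to h_L(y_l)$ for all $l\in I$ is asserted, not proved, and it is the entire content of the degenerate case. It is not routine, because the analogous statement without the minimality/simplex structure is \emph{false}: in $\RR^3$, with $L$ the unit cube and directions $\pm e_1,\pm e_2$, no tetrahedron contains $L$ while lying in $[-\ep,1+\ep]^2\times\RR$ for small $\ep$. (In the exact case $\ep=0$ this is immediate --- the shadow of the tetrahedron would have to be the unit square, so each vertex of the square lifts to a single point of the tetrahedron, yet the cube contains a whole unit segment over each corner; a slightly longer estimate on the vertex heights kills the relaxed case as well.) So your construction must genuinely use that the tight directions are the outer normals of a $d$-simplex $\Sigma\subseteq V$, i.e.\ that the admissible region is a simplex-prism $\Sigma\oplus V^{\perp}$ and not a general prism. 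The lemma you actually need --- every compact convex $L$ with $L_V$ in the relative interior of $\Sigma$ is contained in some $n$-simplex inside $\Sigma\oplus V^{\perp}$ --- is true, and one workable proof is an induction on the codimension $n-d$: form the $(d{+}1)$-simplex $\mathrm{conv}\bigl(\{c+t^2w\}\cup(\Sigma - tw)\bigr)$ for a barycenter $c$ of $\Sigma$ and $w\in V^{\perp}$, check that for large $t$ its cross-sections over the relevant heights are homothets of $\Sigma$ with ratio tending to $1$, so that it contains $L_{V\oplus\RR w}$ in its relative interior while staying inside $\Sigma\oplus\RR w$, and iterate. Until a lemma of this kind is supplied, the degenerate case --- and with it the theorem --- is not established.
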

Lutwak's theorem combines with the Circumscribing Lemma to yield the following useful corollary 
(also from \cite{Lut-contain}).
\begin{corollary} Let $K, L \in \KK_n$.  The inequality
$$V_{n-1,1}(\Delta, K) \leq V_{n-1,1}(\Delta, L)$$
holds for all simplices $\Delta$, if and only if there exists $v \in \RR^n$ such that $K+v \subseteq L$.
\label{lcor}
\end{corollary}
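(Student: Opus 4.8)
The plan is to prove the two directions of the equivalence separately. The direction ``containment implies the inequalities'' is a short consequence of monotonicity and translation invariance of mixed volume, whereas the substantive direction ``the inequalities imply containment'' is where Lutwak's Containment Theorem (Theorem~\ref{lcon}) and the Circumscribing Lemma (Lemma~\ref{clem}) do the real work.

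For the easy direction, suppose $K+v\subseteq L$. I would first record that $V_{n-1,1}(\Delta,\,\cdot\,)$ is invariant under translation of its second entry: by~(\ref{anyvol}), $V_{n-1,1}(\Delta,K+v)=\tfrac1n\int_{\SS^{n-1}}(h_K(u)+v\cdot u)\,dS_\Delta(u)=V_{n-1,1}(\Delta,K)$, since $\int_{\SS^{n-1}}u\,dS_\Delta=\vec{o}$ by~(\ref{surf}). Monotonicity of the mixed volume in its second argument then gives $V_{n-1,1}(\Delta,K)=V_{n-1,1}(\Delta,K+v)\le V_{n-1,1}(\Delta,L)$ for every simplex $\Delta$.

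For the substantive direction, assume the inequality holds for all simplices. By Theorem~\ref{lcon} it suffices to show that for every simplex $\Delta$ with $L\subseteq\Delta$ there is a translate $K+v_\Delta\subseteq\Delta$. The engine is the following equivalence, which I would isolate as the crux: for a simplex $\Delta$ with outer facet normals $u_0,\dots,u_n$, the body $K$ admits a translate inside $\Delta$ if and only if $V_{n-1,1}(\Delta,K)\le V_n(\Delta)$. Writing $c_i=V_{n-1}(\Delta^{u_i})>0$, the base--height formula~(\ref{polyvol}) gives $nV_{n-1,1}(\Delta,K)=\sum_i c_i h_K(u_i)$ and $nV_n(\Delta)=\sum_i c_i h_\Delta(u_i)$. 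A translate $K+v\subseteq\Delta$ exists precisely when the linear system $v\cdot u_i\le h_\Delta(u_i)-h_K(u_i)$ ($i=0,\dots,n$) is feasible. Because $\Delta$ is a simplex, its normals carry the unique-up-to-scale positive dependence $\sum_i c_i u_i=\vec{o}$ --- this is identity~(\ref{surf}) applied to $S_\Delta$ --- so a Farkas/Gordan alternative reduces feasibility of that system to the single scalar condition $\sum_i c_i\bigl(h_\Delta(u_i)-h_K(u_i)\bigr)\ge 0$, that is, to $V_{n-1,1}(\Delta,K)\le V_n(\Delta)$. The Circumscribing Lemma is exactly the boundary case of this dichotomy: $V_{n-1,1}(\Delta,K)=V_n(\Delta)$ corresponds to a translate of $K$ that meets every facet of $\Delta$.

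With the equivalence in hand the proof closes quickly. Since $L\subseteq\Delta$, monotonicity together with $V_{n-1,1}(\Delta,\Delta)=V_n(\Delta)$ gives $V_{n-1,1}(\Delta,L)\le V_n(\Delta)$; combined with the hypothesis $V_{n-1,1}(\Delta,K)\le V_{n-1,1}(\Delta,L)$ this yields $V_{n-1,1}(\Delta,K)\le V_n(\Delta)$, so by the equivalence $K$ translates into $\Delta$. As $\Delta\supseteq L$ was an arbitrary circumscribing simplex, Theorem~\ref{lcon} produces a single $v$ with $K+v\subseteq L$. I expect the crux equivalence to be the main obstacle, and within it the step that converts the scalar mixed-volume inequality back into an actual translation vector: this is where the rigidity of the simplex (exactly $n+1$ facets with a unique positive normal dependence) is indispensable, and is the only place the argument uses more than soft monotonicity.
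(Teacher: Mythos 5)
Your proof is correct, and the overall architecture (easy direction by monotonicity and translation invariance; hard direction by reducing to circumscribing simplices and invoking Theorem~\ref{lcon}) matches what the paper intends when it says the corollary follows from Lutwak's Containment Theorem and the Circumscribing Lemma. Where you genuinely diverge is in the crux step, namely showing that $V_{n-1,1}(\Delta,K)\le V_n(\Delta)$ forces a translate of $K$ into the simplex $\Delta$. The argument the paper is pointing at (Lutwak's) takes the smallest homothet $\Delta'=s\Delta+w$ containing a translate of $K$; minimality forces $\Delta'$ to circumscribe that translate, the Circumscribing Lemma gives $s^nV_n(\Delta)=V_n(\Delta')=V_{n-1,1}(\Delta',K)=s^{n-1}V_{n-1,1}(\Delta,K)\le s^{n-1}V_n(\Delta)$, hence $s\le 1$ and containment follows. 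You instead run a Farkas/Gordan alternative on the system $v\cdot u_i\le h_\Delta(u_i)-h_K(u_i)$, using that the $n+1$ outer normals of a simplex admit a unique-up-to-scale linear dependence, necessarily the positive one $\sum_i c_iu_i=\vec{o}$ coming from~(\ref{surf}) (any $n$ of the normals are linearly independent, so the kernel of the normal matrix is one-dimensional). Both arguments are sound and both exploit the same rigidity of the simplex; the homothety argument buys a purely convex-geometric proof that recycles Lemma~\ref{clem} directly and needs the standard fact that all simplices with the same outer normals are homothets, while your LP-duality route is self-contained at that point, makes the ``if and only if'' character of the containment criterion transparent, and explains the equality case of Lemma~\ref{clem} as the boundary of the alternative rather than using the lemma as an ingredient. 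The only point you assert without proof is the uniqueness of the positive dependence among the simplex normals, but that is a one-line consequence of the linear independence of any $n$ of them, so it is not a gap.
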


Suppose that $\lambda_1 \geq \lambda_2 \geq \ldots \geq \lambda_m > 0$ are positive integers
such that $\lambda_1 + \lambda_2 + \ldots + \lambda_m = n$.  
Denote $\lambda = (\lambda_1, \lambda_2, \ldots, \lambda_m)$.  The vector $\lambda$ is sometimes called a {\em partition} of the positive integer $n$.  Using this notation, the size of the largest part of any partition 
$\lambda$ is given by the first entry $\lambda_1$.

A convex body $K \in \KK_n$ will be called {\em $\lambda$-decomposable} if there exists affine 
subspaces $\xi_i$ of $\RR^n$ such that $\dim \xi_i = \lambda_i$ and 
$\RR^n = \xi_1 \oplus \cdots \oplus \xi_m$ and if there exists compact convex sets
$K_i \subseteq \xi_i$ such that $K = K_1 + \cdots + K_m$.  In this case we will write
$K = K_1 \oplus \cdots \oplus K_m$

The body $K$ will be called
{\em $\lambda$-ortho-decomposable} if $\xi_i \perp \xi_j$ for each $i \neq j$.
For example, a cylinder is an $(n-1,1)$-decomposable body.  
A $(1, 1, \ldots, 1)$-decomposable body is a parallelotope, while
a $(1, 1, \ldots, 1)$-ortho-decomposable body is an orthogonal box.  

For $k \in \{ 1, \ldots, n-1 \}$, denote by $G(n,k)$ the collection of all
$k$-dimensional linear subspaces $\xi$ of $\RR^n$, sometimes called the
$(n,k)$-Grassmannian.  For $\xi \in G(n,k)$ and $K \in \KK_n$, denote by
$K_{\xi}$ the orthogonal projection of the body $K$ onto the subspace $\xi$.

Lutwak's Containment Theorem~\ref{lcon} and its Corollary~\ref{lcor} lead to the following useful condition for determining when the shadows of one body can cover those of another.

\begin{theorem}[First Shadow Containment Theorem]
Let $K, L \in \KK_n$, and suppose that $1 \leq k \leq n-1$.
The orthogonal projections $L_\xi$ of $L$ can cover the corresponding projections
$K_\xi$ of $K$ for all $\xi \in G(n,k)$  if and only if
$$V_{n-1,1}(C, K) \leq V_{n-1,1}(C, L)$$
for all $\lambda$-ortho-decomposable $C \in \KK_n$ such that $\lambda_1 \leq k$.
\label{orthlambda}
\end{theorem}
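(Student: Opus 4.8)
The plan is to prove both implications through a single computational identity that expresses the mixed volume $V_{n-1,1}(C,K)$ against an ortho-decomposable body $C = C_1 \oplus \cdots \oplus C_m$ (with $C_i$ full-dimensional in the mutually orthogonal subspace $\xi_i$, $\dim\xi_i = \lambda_i$) entirely in terms of the projections $K_{\xi_i}$. I would first establish that the surface area measure of an orthogonal direct sum decomposes as a sum of the (embedded) surface area measures of the blocks, each carried on the subsphere $\SS^{n-1}\cap\xi_i$ and weighted by $\prod_{j\ne i}V_{\lambda_j}(C_j)$; for polytopal blocks this is immediate from the description of the facets of a product $C_1\times\cdots\times C_m$, and the general case follows by continuity. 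Combining this with the identity $h_K(u) = h_{K_{\xi_i}}(u)$ for $u\in\xi_i$ (the support function in a direction only sees the projection onto any subspace containing that direction) and the integral formula~(\ref{anyvol}) yields the master identity
\[
V_{n-1,1}(C,K) \;=\; \frac1n\sum_{i=1}^m \lambda_i\Big(\prod_{j\ne i}V_{\lambda_j}(C_j)\Big)\,V^{\xi_i}_{\lambda_i-1,1}(C_i,K_{\xi_i}),
\]
where $V^{\xi_i}$ denotes mixed volume computed inside $\xi_i\cong\RR^{\lambda_i}$. The point is that the coefficients depend only on $C$, so that $V_{n-1,1}(C,K)$ is a non-negative combination of mixed volumes of the projections of $K$ onto subspaces of dimension $\lambda_i \le \lambda_1 \le k$.

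For the forward implication, assume $L_\xi$ covers $K_\xi$ for every $\xi\in G(n,k)$. I would first note that this covering property descends to every subspace of dimension at most $k$: if $\zeta\subseteq\xi$ with $\dim\xi=k$ and $K_\xi + v\subseteq L_\xi$, then projecting within $\xi$ onto $\zeta$ preserves the containment and $(K_\xi)_\zeta = K_\zeta$. Hence for each block subspace $\xi_i$ (of dimension $\lambda_i\le k$) there is a translate of $K_{\xi_i}$ inside $L_{\xi_i}$, and by the translation invariance and monotonicity of mixed volumes, $V^{\xi_i}_{\lambda_i-1,1}(C_i,K_{\xi_i})\le V^{\xi_i}_{\lambda_i-1,1}(C_i,L_{\xi_i})$. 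Substituting into the master identity, whose coefficients are identical for $K$ and $L$, gives $V_{n-1,1}(C,K)\le V_{n-1,1}(C,L)$ for every $\lambda$-ortho-decomposable $C$ with $\lambda_1\le k$; the degenerate (lower-dimensional block) case follows by approximating each $C_i$ by full-dimensional bodies and using continuity of mixed volumes.

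For the reverse implication, assume the displayed mixed-volume inequality for all admissible $C$ and fix $\xi\in G(n,k)$. The goal, by Lutwak's Corollary~\ref{lcor} applied inside $\xi\cong\RR^k$, is to show $V^{\xi}_{k-1,1}(\Delta,K_\xi)\le V^{\xi}_{k-1,1}(\Delta,L_\xi)$ for every $k$-simplex $\Delta\subseteq\xi$; this yields a translate of $K_\xi$ inside $L_\xi$, which is exactly the covering statement. To isolate this single mixed volume I would test against the body $C^{(t)} = \Delta \oplus tQ$, where $Q=I_1\oplus\cdots\oplus I_{n-k}$ is a fixed orthogonal box spanning $\xi^\perp$; this $C^{(t)}$ is $\lambda$-ortho-decomposable with $\lambda=(k,1,\ldots,1)$, so $\lambda_1=k$ and the hypothesis applies for every $t>0$. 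By the master identity, $nV_{n-1,1}(C^{(t)},K)$ is a polynomial in $t$ of degree $n-k$ whose leading coefficient is $k\,V_{n-k}(Q)\,V^{\xi}_{k-1,1}(\Delta,K_\xi)$, while the lower-order terms in $t$ involve only the widths of $K$ along the axes of $Q$; the same holds for $L$. Since $V_{n-1,1}(C^{(t)},K)\le V_{n-1,1}(C^{(t)},L)$ for all $t>0$, comparing leading coefficients as $t\to\infty$ forces $V^{\xi}_{k-1,1}(\Delta,K_\xi)\le V^{\xi}_{k-1,1}(\Delta,L_\xi)$, as required.

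The main obstacle, and the technical heart of the argument, is the master identity itself: verifying the surface-area-measure decomposition for orthogonal direct sums, correctly bookkeeping the combinatorial coefficients, and carrying out the reduction $h_K|_{\xi_i}=h_{K_{\xi_i}}$ that converts an $n$-dimensional mixed volume into lower-dimensional mixed volumes of projections. Once this identity is available, both directions are short: the forward direction is monotonicity plus the elementary descent of covering to smaller subspaces, and the reverse direction is a one-variable leading-coefficient comparison feeding into Lutwak's containment corollary.
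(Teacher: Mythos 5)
Your proof is correct and follows essentially the same route as the paper: both rest on the identity expressing $V_{n-1,1}(C,K)$ for an ortho-decomposable $C$ as a positive combination of the mixed volumes $V_{\lambda_i-1,1}(C_i,K_{\xi_i})$ of the projections onto the block subspaces, with the forward direction via descent of covering to subspaces of dimension at most $k$ plus monotonicity, and the converse via isolating the simplex-block term and invoking Corollary~\ref{lcor}. The only differences are presentational (you derive the identity from the surface-area-measure decomposition and formula~(\ref{anyvol}) rather than the facet formula~(\ref{polyvol}) for polytopes, and you make explicit, via the $t\to\infty$ limit on $\Delta\oplus tQ$, the term-isolation step that the paper performs implicitly by varying the block scalings $a_i$).
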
 

\begin{proof}
Suppose that $C$ is a $\lambda$-ortho-decomposable polytope, with orthogonal
decomposition $C = a_1 C_1 \oplus \cdots \oplus a_m C_m$, where each
$C_i$ has affine hull parallel to a subspace $\xi_i$ of dimension $\lambda_i$
and $a_1, \ldots, a_m > 0$.
Note that 
$$V_n(C) \, = \, V_{\lambda_1}(a_1 C_1) \cdots V_{\lambda_m}(a_m C_m)
\, = \,  a_1^{\lambda_1} \cdots a_m^{\lambda_m} V_{\lambda_1}(C_1)
\cdots V_{\lambda_m}(C_m),$$ 
since the decomposition is orthogonal.

It follows from~(\ref{polyvol}) that
$$V_{n-1,1}(C, K) 
= \frac{1}{n} \sum_{u \perp \, \partial C} h_K(u) V_{n-1}(C^u)$$
where the sum is taken over all unit directions $u \in \RR^n$ normal to facets of $C$.
The product structure of $C$ then implies that
$$
V_{n-1,1}(C, K) 
= \frac{1}{n} \sum_{i=1}^m \sum_{u \perp \, \partial C_i} h_K(u) V_{\lambda_1}(a_1 C_1) 
\cdots V_{\lambda_i-1}(a_{i} C_i^u) \cdots V_{\lambda_m}(a_m C_m)\\[1mm]
$$
where, for each $i$, the inner sum is taken over all unit directions $u \in \xi_i$ normal to facets of $C_i$.
Hence,
\begin{align*}
V_{n-1,1}(C, K) 
&= \frac{1}{n} \sum_{i=1}^m \frac{V_n(C)}{V_{\lambda_i}(a_i C_i)} 
\sum_{u \perp \, \partial C_i} h_K(u) V_{\lambda_1 -1}(C_i^u) a_i^{\lambda_i -1}\\[1mm]
&= \frac{1}{n} \sum_{i=1}^m \frac{iV_n(C)}{a_i V_{\lambda_i}(C_i)} \, 
V_{\lambda_i-1,1}(C_i,K_{\xi_i})\\[1mm]
\end{align*}
for all $a_1, \ldots, a_m > 0$.

If $L_\xi$ can cover $K_\xi$ for all $\xi \in G(n,k)$, then 
$L_\eta$ also can cover $K_\eta$ for all lower dimensional 
subspaces $\eta \in G(n,j)$, where $1 \leq j \leq k$.
In particular $L_{\xi_i}$ can cover $K_{\xi_i}$ for each $i$, 
since $\dim \xi_i = \lambda_i \leq \lambda_1 = k$.  It follows that each 
$V_{i-1,1}(C_i,K_{\xi_i}) \leq V_{i-1,1}(C_i,L_{\xi_i})$
by the monotonicity and translation invariance of mixed volumes.  Therefore, 
$V_{n-1,1}(C, K) \leq V_{n-1,1}(C, L)$ for all $\lambda$-ortho-decomposable polytopes $C$.
The inequality then holds for arbitrary $\lambda$-ortho-decomposable $C$ by continuity of mixed volumes.

Conversely, if $V_{n-1,1}(C, K) \leq V_{n-1,1}(C, L)$ 
for all $\lambda$-ortho-decomposable $C \in \KK_n$ such that $\lambda_1 \leq k$,
then 
$$
\frac{1}{n} \sum_{i=1}^m \frac{iV_n(C)}{a_i V_{\lambda_i}(C_i)} V_{\lambda_i-1,1}(C_i,K_{\xi_i}) 
\leq \frac{1}{n} \sum_{i=1}^m \frac{iV_n(C)}{a_i V_{\lambda_i}(C_i)} V_{{\lambda_i}-1,1}(C_i,L_{\xi_i})
$$
for all such $C$ and all $a_1, \ldots, a_m > 0$.

In particular, $V_{\lambda_i-1, 1}(\Delta, K_{\xi_i}) \leq V_{\lambda_i-1, 1}(\Delta, L_{\xi_i})$
for every $\lambda_i$-simplex $\Delta$ 
in every $\lambda_i$-dimensional subspace $\xi_i$ of $\RR^n$
so that $L_{\xi_i}$ can cover $K_{\xi_i}$ by Corollary~\ref{lcor}.
\end{proof}

\section{Cylinder bodies and shadow covering}
\label{sec-cyl}

So far we have restricted attention to {\em orthogonal} cylinders and decomposable sets.  
However, the previous results generalize easily 
to arbitrary (possibly oblique) cylinders and decompositions.

For $S \subseteq \RR^n$ and a nonzero vector $u$, let $\LL_S(u)$ denote the set of straight lines in $\RR^n$
parallel to $u$ and meeting the set $S$.
\begin{proposition} Let $K, L \in \KK_n$.  Let $\psi: \RR^n \rightarrow \RR^n$ be a
non-singular linear transformation.  Then $L_u$ contains a translate of $K_u$ for all unit directions $u$
if and only if $(\psi L)_u$ contains a translate of $(\psi K)_u$ for all $u$.
\label{affcover}
\end{proposition}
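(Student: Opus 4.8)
The main obstacle is that orthogonal projection onto $u^\perp$ does not commute with the linear map $\psi$, so one cannot simply transport the shadow $K_u$ to the shadow $(\psi K)_{\psi u}$ directly. The plan is to first reformulate the covering relation in a way that is insensitive to the orthogonal structure and that transforms correctly under $\psi$. Writing $\RR u = \{tu : t \in \RR\}$ for the line spanned by $u$, and $\pi_u$ for orthogonal projection onto $u^\perp$, the key claim I would establish is that
$$L_u \text{ contains a translate of } K_u \iff K + v \subseteq L + \RR u \text{ for some } v \in \RR^n.$$
Here $L + \RR u$ is the cylinder over $L$ with axis direction $u$.

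To prove this claim I would argue both directions. For the forward direction, given $K_u + v_0 \subseteq L_u$ with $v_0 \in u^\perp$, each $x \in K$ satisfies $\pi_u(x + v_0) = \pi_u(x) + v_0 \in L_u$, so $\pi_u(x + v_0) = \pi_u(y)$ for some $y \in L$; then $x + v_0 - y \in \ker \pi_u = \RR u$, whence $K + v_0 \subseteq L + \RR u$. For the reverse direction I would simply apply $\pi_u$ to the inclusion $K + v \subseteq L + \RR u$; since $\pi_u$ annihilates $\RR u$, this gives $K_u + \pi_u(v) \subseteq L_u$, a translate of $K_u$ inside $L_u$. I expect this equivalence to be the crux of the argument, since everything else follows formally.

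With the cylinder reformulation in hand, the behaviour under $\psi$ is forced by linearity. Because $\psi$ is a bijection preserving inclusions and Minkowski sums, and $\psi(\RR u) = \RR(\psi u)$, I would obtain
$$K + v \subseteq L + \RR u \iff \psi K + \psi v \subseteq \psi L + \RR(\psi u),$$
the reverse implication coming from applying $\psi^{-1}$. Reading the right-hand side through the reformulation (now for the bodies $\psi K, \psi L$ and the unit direction $w = \psi u / |\psi u|$, which spans the same line $\RR(\psi u)$), this says precisely that $(\psi L)_w$ contains a translate of $(\psi K)_w$. Thus covering holds for $(K, L)$ in the direction $u$ if and only if it holds for $(\psi K, \psi L)$ in the direction $w$.

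Finally I would match the universal quantifiers. The covering condition in a given direction depends only on the line spanned by that direction, and the assignment $\RR u \mapsto \psi(\RR u) = \RR(\psi u)$ is a bijection of $G(n,1)$ onto itself since $\psi$ is a linear isomorphism. Hence, as $u$ runs over all unit directions, so does the normalization $w$ of $\psi u$, and the statement that $L_u$ covers $K_u$ for every $u$ is equivalent to the statement that $(\psi L)_w$ covers $(\psi K)_w$ for every $w$. This yields the proposition. The only genuine work is the cylinder characterization of shadow covering; once covering is expressed via the cylinder $L + \RR u$ rather than the orthogonal shadow $L_u$, its invariance under $\psi$ is automatic.
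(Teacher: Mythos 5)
Your proof is correct and follows essentially the same route as the paper: your cylinder reformulation $K+v \subseteq L + \RR u$ is exactly the paper's condition $\LL_{K+v}(u) \subseteq \LL_{L}(u)$ on sets of lines parallel to $u$, and the transport under $\psi$ via $\psi(\RR u) = \RR(\psi u)$ with the renormalized direction $\tilde u = \psi u/|\psi u|$ is identical. No gaps; the only difference is notational.
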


\begin{proof} The projection $L_u$ contains a translate of $K_u$ for each unit vector $u$ if and only if,
for each $u$, there exists $v_u$ such that
\begin{equation}
\LL_{K+v_u} (u) \subseteq \LL_L(u).
\label{hum}
\end{equation}
But $\LL_{K+v_u}(u) = \LL_{K}(u) + v_u$ 
and $\psi \LL_{K}(u) = \LL_{\psi K}(\psi u)$.
It follows that~(\ref{hum}) holds if and only if
$\LL_{K}(u) + v_u \subseteq \LL_L(u)$,
if and only if
$$\LL_{\psi K}( \psi u) + \psi v_u \subseteq \LL_{\psi L}(\psi u) \;\;\; \hbox{ for all unit } u,$$
Set 
$$\tilde{u} = \frac{\psi u}{|\psi u|} \;\;\; \hbox{ and } \;\;\; \tilde{v} = \psi v_u.$$
The relation~(\ref{hum}) now holds if and only if,
for all $\tilde{u}$, there exists $\tilde{v}$ such that
$$\LL_{\psi K}(\tilde{u}) + \tilde{v} \subseteq \LL_{\psi L}(\tilde{u}),$$
which holds if and only if $(\psi L)_{\tilde{u}}$ 
contains a translate of $(\psi K)_{\tilde{u}}$ for all $\tilde{u}$.
\end{proof}

We are now in a position to define a much larger family of objects
that serve to generalize the Shadow Containment Theorem~\ref{orthlambda}.
 
\begin{definition}
For each $k \in \{1, \ldots, n\}$ denote by 
$\cy_{n,k}$ set of all bodies $K \in \KK_n$ 
that can be approximated (in the usual Hausdorff topology) by Blaschke combinations of 
$\lambda$-decomposable sets for any $\lambda$ such that $\lambda_1 \leq k$.
Elements of $\cy_{n,k}$ will be called the $k$-cylinder bodies of $\RR^n$.
\end{definition}

Recall that any centrally symmetric polytope is a Blaschke sum of parallelotopes.
It follows that $\cy_{n,1}$ is precisely the set of all centrally symmetric convex bodies in $\RR^n$.  
For $n \geq 3$ and $k \geq 2$, the cylinder bodies $\cy_{n,k}$ 
are a larger family of objects.  For example, a triangular cylinder in $\RR^3$ lies in
$\cy_{3,2}$, but not in $\cy_{3,1}$, since it is not centrally symmetric.  
Note also that $\cy_{n,k}$ is closed under affine transformations.

The definition of $\cy_{n,k}$ depends on the ambient dimension $n$ as well as the value $k$,
because the notion of Blaschke sum $\bs$ depends on $n$.  For example, while 
Minkowski sum satisfies the projection identity $(K+L)_\xi = K_\xi + L_\xi$ for subspaces $\xi \subseteq \RR^n$,
the analogous statement need not hold for Blaschke summation.

Note also that $\cy_{n,n} = \KK_n$ by definition.  Moreover, it follows from the 
definition that $\cy_{n,i} \subseteq \cy_{n,j}$ whenever $i \leq j$.  
It will be shown in Section~\ref{pos}  
that $\cy_{n,i}$ is a {\em proper} subset of $\cy_{n,j}$ when $i < j$.
In particular, it will be seen that
full-dimensional simplices
are {\em not} $k$-cylinder bodies of $\RR^n$ for any $k < n$.
A necessary condition for being a $k$-cylinder body will be
described in Section~\ref{sec-ineq}. 

The significance of each collection $\cy_{n,k}$ is described in part by the following theorem.
\begin{theorem}[Second Shadow Containment Theorem]  Let $K, L \in \KK_n$
and let $1 \leq k \leq n$. 
The following are equivalent:
\begin{enumerate}
\item[\bf (i)] The orthogonal projections $L_\xi$ of $L$ can cover the corresponding projections
$K_\xi$ of $K$ for all subspaces $\xi \in G(n,k)$.\\[-2mm]

\item[\bf (ii)] The affine projections $\pi L$ of $L$ can cover the corresponding projections
$\pi K$ of $K$ for all affine projections $\pi$ of rank $k$.\\[-2mm]

\item[\bf (iii)] $V_{n-1,1}(C, K) \leq V_{n-1,1}(C, L)$ for all 
{\em $\lambda$-ortho-decomposable} sets $C$ such that $\lambda_1 \leq k$.\\[-2mm]

\item[\bf (iv)] $V_{n-1,1}(C, K) \leq V_{n-1,1}(C, L)$ for all 
{\em $\lambda$-decomposable} sets $C$ such that $\lambda_1 \leq k$.\\[-2mm]

\item[\bf (v)] $V_{n-1,1}(Q, K) \leq V_{n-1,1}(Q, L)$ for all $k$-cylinder bodies $Q \in \cy_{n,k}$.
\end{enumerate}
\label{omni-k}
\end{theorem}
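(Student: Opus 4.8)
The plan is to arrange the five conditions into a single cycle of implications, using the First Shadow Containment Theorem~\ref{orthlambda} (which already supplies (i) $\Leftrightarrow$ (iii)) as the engine and the definition of $\cy_{n,k}$ as a Hausdorff closure of Blaschke combinations to reach (v). The conceptual key I would isolate first is that covering of $k$-shadows is really a statement about cylinders. For a rank-$k$ affine projection $\pi$ with fiber direction (kernel) $W$, $\dim W = n-k$, one has $\pi^{-1}(\pi L) = L + W$, and writing a candidate translate of $\pi K$ as $\pi(K+v)$ shows that $\pi L$ contains a translate of $\pi K$ if and only if the cylinder $L + W$ contains a translate of $K$. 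The orthogonal projection onto $\xi \in G(n,k)$ is the special case $W = \xi^\perp$, with the identical characterization $K + v \subseteq L + \xi^\perp$. Since $W = \xi^\perp$ ranges over all of $G(n,n-k)$ as $\xi$ ranges over $G(n,k)$, and since an arbitrary rank-$k$ affine projection realizes an arbitrary fiber direction $W \in G(n,n-k)$, both (i) and (ii) reduce to the single condition that $L + W$ contains a translate of $K$ for every $W \in G(n,n-k)$. This gives (i) $\Leftrightarrow$ (ii) at once, and it also yields the affine invariance I will need below: if $\psi$ is nonsingular and linear, then $\psi(L+W) = \psi L + \psi W$ with $\psi W$ again ranging over $G(n,n-k)$, so (i) holds for $K,L$ if and only if it holds for $\psi K, \psi L$.

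With (i) $\Leftrightarrow$ (ii) $\Leftrightarrow$ (iii) in hand (the last equivalence being exactly Theorem~\ref{orthlambda}), I would next prove (ii) $\Rightarrow$ (iv), the step that carries the argument from orthogonal to oblique decompositions. Given a $\lambda$-decomposable $C = C_1 \oplus \cdots \oplus C_m$ with $\lambda_1 \le k$, whose summands lie in possibly oblique subspaces $\xi_1, \ldots, \xi_m$, choose a nonsingular linear map $\psi$ carrying the adapted frame $\xi_1, \ldots, \xi_m$ to mutually orthogonal subspaces, so that $\psi C$ is $\lambda$-ortho-decomposable with the same $\lambda$. By the affine invariance just noted, (i) holds for $\psi K, \psi L$, hence so does (iii) by Theorem~\ref{orthlambda}; applying the latter to the ortho-decomposable body $\psi C$ gives $V_{n-1,1}(\psi C, \psi K) \le V_{n-1,1}(\psi C, \psi L)$. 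Dividing by $|\det \psi|$ and using the homogeneity $V_{n-1,1}(\psi A, \psi B) = |\det\psi|\,V_{n-1,1}(A,B)$ then yields $V_{n-1,1}(C,K) \le V_{n-1,1}(C,L)$, which is (iv).

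It remains to link (iv) to (v). The implication (iv) $\Rightarrow$ (v) is where the definition of $\cy_{n,k}$ enters: every $Q \in \cy_{n,k}$ is a Hausdorff limit of Blaschke combinations $a_1 \cdot C_1 \bs \cdots \bs a_r \cdot C_r$ of $\lambda$-decomposable sets $C_i$ with $\lambda_1 \le k$. By the Blaschke-linearity identity~(\ref{right}), the functional $V_{n-1,1}(\cdot, M)$ converts such a Blaschke combination into the corresponding nonnegative linear combination of the $V_{n-1,1}(C_i, M)$, so (iv) applied to each $C_i$ gives the inequality for every such combination; continuity of mixed volumes in the Hausdorff topology then passes the inequality to the limit $Q$, giving (v). Finally (v) $\Rightarrow$ (iv) $\Rightarrow$ (iii) is immediate from the nesting of test classes, since each $\lambda$-decomposable set with $\lambda_1 \le k$ lies in $\cy_{n,k}$ and each $\lambda$-ortho-decomposable set is in particular $\lambda$-decomposable. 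Combining the pieces closes the cycle (iii) $\Rightarrow$ (i) $\Rightarrow$ (ii) $\Rightarrow$ (iv) $\Rightarrow$ (v) $\Rightarrow$ (iv) $\Rightarrow$ (iii), so all five conditions are equivalent. I expect the main obstacle to be the cylinder reduction underlying (i) $\Leftrightarrow$ (ii) and the affine invariance it supplies for (ii) $\Rightarrow$ (iv): once the covering condition is recognized as depending only on the fiber direction $W$, the passage between oblique and orthogonal decompositions together with the determinant scaling of mixed volumes does the real work, while the Blaschke and continuity steps are routine.
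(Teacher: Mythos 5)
Your proposal is correct and follows essentially the same route as the paper: Theorem~\ref{orthlambda} supplies (i) $\Leftrightarrow$ (iii), a nonsingular linear map together with the determinant scaling of mixed volumes carries the ortho-decomposable case over to the oblique one, and Blaschke-linearity of $V_{n-1,1}(\cdot,M)$ in its first argument plus continuity of mixed volumes yields (v), with the reverse implications holding a fortiori. The only real difference is cosmetic: you derive (i) $\Leftrightarrow$ (ii) and the needed affine invariance directly from the fiber characterization $K+v \subseteq L+W$, whereas the paper cites Proposition~\ref{affcover}; your version has the minor advantage of being phrased for general rank $k$ rather than just hyperplane projections.
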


\begin{proof}  The equivalence of {\bf (i)} and {\bf (ii)} follows from Proposition~\ref{affcover}. 
The equivalence of {\bf (i)} and {\bf (iii)} follows from Theorem~\ref{orthlambda}. 

To show that {\bf (iii)} implies {\bf (iv)}, suppose that {\bf (iii)} holds for the pair $K,L$.  
It follows from {\bf (i)} and Proposition~\ref{affcover} that {\bf (i)} also holds for the 
pair of bodies $\psi^{-1} K, \psi^{-1} L$, for any non-singular affine transformation $\psi$.  
Therefore {\bf (iii)} also 
holds for the 
pair of bodies $\psi^{-1} K, \psi^{-1} L$; that is,
$$V_{n-1,1}(C , \psi^{-1} K) \leq V_{n-1,1}(C , \psi^{-1} L).$$
for all $\lambda$-ortho-decomposable sets $C$ such that $\lambda_1 \leq k$.
Let us suppose that $\psi$ has unit determinant.
Then $V_{n-1,1}(\psi C , K) = V_{n-1,1}(C , \psi^{-1} K)$, and similarly for $L$, 
by the affine invariance of (mixed) volumes, so that
$$V_{n-1,1}(\psi C , K) \leq V_{n-1,1}( \psi C ,  L).$$
for all $\lambda$-ortho-decomposable sets $C$ such that $\lambda_1 \leq k$.
If $C'$ is a $\lambda$-decomposable set, then
$C' = \psi C$ for some $\lambda$-ortho-decomposable set $C$ 
and some affine transformation $\psi$ of unit determinant.  
{\bf (iv)} now follows.

{\bf (iv)} implies {\bf (v)} by the Blaschke-linearity of the functional 
$V_{n-1, 1}(\cdot, \cdot)$ in its {\em first} parameter and the continuity of $V_{n-1, 1}$.

Finally, {\bf (v)} implies {\bf (iv)}, and {\bf (iv)} implies {\bf (iii)}, 
in both cases {\em a fortiori}.  
\end{proof}

\section{A positive answer for covering cylinder bodies}
\label{pos}

In Section~\ref{gencounter} we described examples of convex bodies $K$ and $L$ such that 
the orthogonal projections $L_\xi$ of $L$ covered the corresponding projections
$K_\xi$ of $K$ for all $\xi \in G(n,k)$, even though $V_n(L) < V_n(K)$.
The next theorem shows that this volume anomaly can be avoided if $L \in \cy_{n,k}$.

\begin{theorem}  Let $K, L \in \KK_n$ and let $1 \leq k \leq n-1$.
Suppose that the orthogonal projections 
$L_\xi$ of $L$ can cover the corresponding projections
$K_\xi$ of $K$ for all $\xi \in G(n,k)$.  
If $L \in \cy_{n,k}$, then $V_n(K) \leq V_n(L)$. 

If, in addition, the set $L$ has non-empty interior, 
then $V_n(K) = V_n(L)$ if and only if $K$ and $L$ are translates.
\label{cylvol}
\end{theorem}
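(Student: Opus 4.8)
The plan is to reduce everything to a single mixed-volume inequality by feeding $L$ itself into the Second Shadow Containment Theorem~\ref{omni-k}, and then to close the loop with the Minkowski mixed volume inequality~(\ref{mmv}). First I would treat the main case $V_n(L) > 0$. Since the orthogonal projections of $L$ cover those of $K$ for all $\xi \in G(n,k)$, condition {\bf (i)} of Theorem~\ref{omni-k} holds, hence so does {\bf (v)}: $V_{n-1,1}(Q, K) \leq V_{n-1,1}(Q, L)$ for every $Q \in \cy_{n,k}$. The crucial observation is that $L$ itself lies in $\cy_{n,k}$, so I may take $Q = L$. Using $V_{n-1,1}(L,L) = V_n(L)$, this gives
$$V_{n-1,1}(L, K) \leq V_n(L).$$
Applying~(\ref{mmv}) to the pair $(L, K)$ yields $V_{n-1,1}(L, K)^n \geq V_n(L)^{n-1} V_n(K)$, and chaining the two inequalities produces
$$V_n(L)^n \geq V_{n-1,1}(L, K)^n \geq V_n(L)^{n-1} V_n(K).$$
Dividing by $V_n(L)^{n-1} > 0$ gives $V_n(K) \leq V_n(L)$, as desired.

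Next I would dispose of the degenerate case $V_n(L) = 0$ by a direct width argument, since the mixed-volume machinery carries no information there. Here $L$ lies in a hyperplane with unit normal $w$, so $L$ has zero width in the direction $w$. Choosing any $\xi \in G(n,k)$ with $w \in \xi$ (possible since $1 \leq k \leq n-1$), orthogonal projection onto $\xi$ preserves the $w$-component, so $L_\xi$ still has zero width in direction $w$ within $\xi$. Because $K_\xi$ can be translated into $L_\xi$, the set $K_\xi$---and therefore $K$ itself---has zero width in direction $w$, whence $V_n(K) = 0 = V_n(L)$ and the inequality holds trivially.

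For the equality characterization I would assume $L$ has non-empty interior, so $V_n(L) > 0$, and suppose $V_n(K) = V_n(L)$. Then the two displayed inequalities above are forced to be equalities; in particular equality holds in~(\ref{mmv}) for the pair $(L, K)$. Since $V_n(K) = V_n(L) > 0$, the body $K$ also has interior, so the equality conditions for~(\ref{mmv}) (inherited from Brunn--Minkowski~(\ref{bmcc})) imply that $K$ and $L$ are homothetic, say $K = aL + x$. Comparing volumes forces the homothety ratio to satisfy $a^n = 1$, hence $a = 1$ and $K = L + x$ is a translate of $L$; the converse implication is immediate.

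The entire substance of the proof is front-loaded into Theorem~\ref{omni-k}, and I expect no serious obstacle in the main case: the only genuine maneuver is the self-substitution $Q = L$, which is licensed precisely by the hypothesis $L \in \cy_{n,k}$ and which converts the covering condition into the single inequality $V_{n-1,1}(L,K) \leq V_n(L)$. The point requiring the most care is the degenerate case $V_n(L) = 0$, where one cannot divide and must instead argue directly that a lower-dimensional $L$ forces $K$ to be lower-dimensional as well.
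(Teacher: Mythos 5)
Your proof is correct and follows essentially the same route as the paper: invoke Theorem~\ref{omni-k} (condition {\bf (v)}) with $Q = L$, combine the resulting bound $V_{n-1,1}(L,K) \leq V_n(L)$ with the Minkowski mixed volume inequality~(\ref{mmv}), and read off the equality case from the homothety conditions. Your separate width argument for the degenerate case $V_n(L) = 0$ is a small but genuine improvement, since the paper's final step implicitly divides by $V_n(L)^{(n-1)/n}$ and thus silently assumes $L$ has positive volume.
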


\begin{proof}  If the orthogonal projections 
$L_\xi$ of $L$ can cover the corresponding projections
$K_\xi$ of $K$ for all $\xi \in G(n,k)$, then
$$V_{n-1,1}(Q,  K) \leq V_{n-1,1}(Q,  L)$$
for all $Q \in \cy_{n,k}$, by Theorem~\ref{omni-k}.  
If $L \in \cy_{n,k}$ as well, then
$$V_{n-1,1}(L, K) \leq V_{n-1,1}(L,  L) = V_n(L)$$
Meanwhile, the Minkowski mixed volume inequality~(\ref{mmv}) asserts that
$$V_n(L)^{(n-1)/n}  V_n(K)^{1/n} \leq V_{n-1,1}(L,  K),$$ 
Hence $V_n(K) \leq V_n(L)$.  If equality holds and $V_n(L) > 0$, then $K$ and $L$ are 
homothetic bodies of the same volume by the equality conditions of~(\ref{mmv}), 
so that $K$ and $L$ must be translates.
\end{proof}

The simplicial counterexamples of Section~\ref{counter} along with Theorem~\ref{cylvol}
yield the following immediate corollary.
\begin{corollary} An $n$-dimensional simplex is never an element of $\cy_{n,n-1}$.
\end{corollary}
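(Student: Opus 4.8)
The plan is to argue by contradiction, first reducing to a single distinguished simplex and then playing the explicit counterexample of Section~\ref{counter} against Theorem~\ref{cylvol}. Since any two $n$-simplices are affinely equivalent and the family $\cy_{n,n-1}$ is closed under affine transformations, I would begin by observing that it suffices to show the regular unit-edge simplex $\Delta$ does not lie in $\cy_{n,n-1}$; every other $n$-simplex is an affine image of $\Delta$ and hence excluded along with it. The planar case is immediate and I would dispose of it first: for $n=2$ the family $\cy_{2,1}=\cy_{2,n-1}$ consists precisely of the centrally symmetric convex bodies, and a triangle is not centrally symmetric. So I may assume $n \geq 3$.

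Next I would suppose, toward a contradiction, that $\Delta \in \cy_{n,n-1}$, and bring in the body $K^{\epsilon} = \epsilon \Delta + \left(\tfrac{1-\epsilon}{n\sqrt{2}}\right) B_n$ from Section~\ref{counter}. By Proposition~\ref{1}, for every unit direction $u$ there is a translate of the shadow $K^{\epsilon}_u$ lying inside $\Delta_u$; as $u$ ranges over all unit vectors, the hyperplanes $u^{\perp}$ range over all of $G(n,n-1)$, so this says exactly that the $(n-1)$-dimensional projections of $\Delta$ cover the corresponding projections of $K^{\epsilon}$. The hypotheses of Theorem~\ref{cylvol} are therefore met with $L=\Delta$, $K=K^{\epsilon}$, and $k=n-1$, and because $\Delta \in \cy_{n,n-1}$ the theorem delivers $V_n(K^{\epsilon}) \leq V_n(\Delta)$.

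This directly contradicts Proposition~\ref{2}, which guarantees $V_n(K^{\epsilon}) > V_n(\Delta)$ as soon as $\epsilon$ is chosen sufficiently close to $1$. Fixing one such $\epsilon$ completes the contradiction and forces $\Delta \notin \cy_{n,n-1}$; the affine reduction above then yields the claim for every $n$-simplex.

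As for where the work actually resides: essentially none remains at this stage, which is why the statement is flagged as an immediate corollary. The two Propositions of Section~\ref{counter} and Theorem~\ref{cylvol} have already carried the entire burden, and the only points that repay a moment's attention are the affine reduction to the regular simplex (invoking the stated affine invariance of $\cy_{n,n-1}$) and the separate, trivial treatment of $n=2$. The conceptual heart of the argument is simply the recognition that if the covering body $\Delta$ were itself a cylinder body, its membership in $\cy_{n,n-1}$ would contradict the volume reversal already exhibited for $\Delta$ and $K^{\epsilon}$.
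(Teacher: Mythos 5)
Your proof is correct and follows essentially the same route as the paper, which presents the corollary as an immediate consequence of the Section~\ref{counter} counterexample ($K^{\epsilon}$ versus $\Delta$) combined with Theorem~\ref{cylvol}. The extra care you take with the affine reduction to the regular simplex (using the stated affine invariance of $\cy_{n,k}$) and the separate trivial case $n=2$ are sensible touches that the paper leaves implicit.
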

In particular, the collection of $(n-1)$-cylinder bodies $\cy_{n,n-1}$
forms a {\em proper} subset of $\cy_{n,n} = \KK_n$.

More generally we have the following.
\begin{corollary} For $1 \leq i < j \leq n$ the set $\cy_{n,i}$ is a proper subset of $\cy_{n,j}$.
\end{corollary}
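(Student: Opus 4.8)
The plan is to reduce the statement to consecutive values of the index and then to separate $\cy_{n,k}$ from $\cy_{n,k+1}$ using the counterexamples of Theorem~\ref{counters} together with the volume inequality of Theorem~\ref{cylvol}. First I would observe that it suffices to produce, for each $k \in \{1, \ldots, n-1\}$, a single body lying in $\cy_{n,k+1}$ but not in $\cy_{n,k}$. Indeed, given $1 \leq i < j \leq n$, the definition already gives the chain $\cy_{n,i} \subseteq \cy_{n,i+1} \subseteq \cdots \subseteq \cy_{n,j}$, so a body $L \in \cy_{n,i+1} \setminus \cy_{n,i}$ automatically satisfies $L \in \cy_{n,j} \setminus \cy_{n,i}$, which establishes the proper inclusion $\cy_{n,i} \subsetneq \cy_{n,j}$.

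The separating body will come directly from the counterexample construction. Assume $n \geq 3$ and fix $k$ with $1 \leq k \leq n-1$. Theorem~\ref{counters} produces bodies $K, L \in \KK_n$ such that $L_\xi$ covers $K_\xi$ for every $\xi \in G(n,k)$, while $V_m(K) > V_m(L)$ for all $m > k$; taking $m = n$, which is legitimate since $k < n$, yields $V_n(K) > V_n(L)$. The point I would emphasize is that the body $L$ appearing in that construction is, by its very definition, an orthogonal sum of a $(k+1)$-dimensional simplex with a box $\epsilon C$ in the complementary $n-k-1$ coordinates, reducing to $L = \Delta$ outright when $k = n-1$. Hence $L$ is $\lambda$-decomposable with partition $\lambda = (k+1, 1, \ldots, 1)$, so that $\lambda_1 = k+1$ and therefore $L \in \cy_{n,k+1}$.

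It then remains to certify that $L \notin \cy_{n,k}$, and here the volume theorem is exactly the right instrument, since it refers only to membership in $\cy_{n,k}$ and not to any particular decomposition of $L$. Were $L$ an element of $\cy_{n,k}$, then, because $L_\xi$ covers $K_\xi$ for all $\xi \in G(n,k)$, Theorem~\ref{cylvol} would force $V_n(K) \leq V_n(L)$, contradicting the strict inequality $V_n(K) > V_n(L)$ recorded above. Thus $L \in \cy_{n,k+1} \setminus \cy_{n,k}$, which is precisely what the reduction demanded. The remaining case $n = 2$ concerns only the pair $i = 1$, $j = 2$, and is immediate: $\cy_{2,1}$ is exactly the collection of centrally symmetric planar bodies, so any triangle lies in $\cy_{2,2} = \KK_2$ but not in $\cy_{2,1}$.

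The only genuinely substantive step is recognizing that the body $L$ manufactured in Theorem~\ref{counters} is itself a $(k+1)$-cylinder body; once this decomposition is read off from the construction, the contrapositive of Theorem~\ref{cylvol} closes the argument. The main obstacle, such as it is, lies in keeping the two projection levels straight, namely that the covering of shadows holds at dimension $k$ while the strict volume comparison is drawn at the top dimension $n$. This is exactly the form in which Theorem~\ref{counters} delivers its conclusion, so no additional estimates are required.
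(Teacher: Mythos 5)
Your argument is correct and follows essentially the same route as the paper: reduce to consecutive indices, take the pair $K, L$ from Theorem~\ref{counters}, observe that $L$ is $\lambda$-decomposable for $\lambda = (k+1, 1, \ldots, 1)$ so that $L \in \cy_{n,k+1}$, and then invoke Theorem~\ref{cylvol} against the strict inequality $V_n(K) > V_n(L)$ to conclude $L \notin \cy_{n,k}$. Your separate treatment of $n=2$ via a triangle is a harmless addition not present in the paper.
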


\begin{proof}  It follows directly from the definition of $\cy_{n,i}$ that
$\cy_{n,i} \subseteq \cy_{n,j}$ when $i < j$.  It remains to show that
$\cy_{n,i} \neq \cy_{n,j}$ when $i < j$.

To see this, observe that 
the set $L$ constructed in the proof of Theorem~\ref{counters} satisfies $L \in \cy_{n,k+1}$,
because $L$ is $\lambda$-decomposable for $\lambda = (k+1, 1, \ldots, 1)$.  Let $K$
also be chosen as in the proof of Theorem~\ref{counters}.  Recall that the $k$-shadows $L_{\xi}$ 
of $L$ can cover those of $K$ for every $\xi \in G(n,k)$.  Since
$V_n(L) < V_n(K)$, it follows from Theorem~\ref{cylvol} that $L \notin \cy_{n,k}$.
Hence $\cy_{n,k} \neq \cy_{n,k+1}$.
\end{proof}

In other words, the collections $\cy_{n,k}$ form a {\em strictly} increasing chain
$$\cy_{n,1} \subset \cy_{n,2} \subset \cdots \subset \cy_{n,n-1} \subset \cy_{n,n} = \KK_n$$
where the elements of $\cy_{n,1}$ are precisely the centrally symmetric sets in $\KK_n$.

\section{A geometric inequality for cylinder bodies}
\label{sec-ineq}

For positive integers $n \geq 2$ denote
$$\sigma_n = \left\{
\begin{array}{ll}
\displaystyle{\tfrac{\sqrt{n+2}}{2n+2}} & \hbox{ if } n \hbox{ is even}\\[4mm]
\displaystyle{\tfrac{1}{2\sqrt{n}}} & \hbox{ if } n \hbox{ is odd}
\end{array}
\right.$$

Recall that we denote the surface area of a convex body $K$ by $S(K)$
and the minimal width of $K$ by $d_K$.

\begin{theorem}[Cylinder body inequality]
Let $K \in \KK_n$.  If $K \in \cy_{n,i}$, then 
\begin{equation}
\sigma_{i} d_K S(K) \leq n V_n(K).
\label{cineq}
\end{equation}
\label{cyl-ineq}
\end{theorem}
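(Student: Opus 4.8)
The plan is to compare $K$ with a suitable Euclidean ball and then to exploit the Second Shadow Containment Theorem~\ref{omni-k} by feeding $K$ itself back in as the test body. I would set $\rho = \sigma_{i} d_K$, let $B_n$ denote the unit ball, and first establish the geometric claim that every $i$-dimensional shadow of $K$ contains a translate of the corresponding shadow of $\rho B_n$. To see this, fix $\xi \in G(n,i)$. For each unit $w \in \xi$ one has $h_{K_\xi}(w) = h_K(w)$, so the width function of $K_\xi$ is the restriction to $\xi$ of that of $K$, giving $d_{K_\xi} \geq d_K$. Since $K_\xi$ is an $i$-dimensional body in $\xi \cong \RR^i$, Steinhagen's inequality~(\ref{steinhagen}) in dimension $i$ yields $r_{K_\xi} \geq \sigma_{i} d_{K_\xi} \geq \sigma_{i} d_K = \rho$. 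Hence $K_\xi$ contains a translate of the $i$-ball of radius $\rho$, which is exactly $(\rho B_n)_\xi$; that is, $K_\xi$ can cover $(\rho B_n)_\xi$ for every $\xi \in G(n,i)$.

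Next I would invoke Theorem~\ref{omni-k} with the covering body $L = K$ and with the role of ``$K$'' in that theorem played by $\rho B_n$. The covering relation just established is condition~{\bf (i)}, so the equivalent condition~{\bf (v)} gives
$$V_{n-1,1}(Q, \rho B_n) \leq V_{n-1,1}(Q, K) \qquad \text{for all } Q \in \cy_{n,i}.$$
The decisive step is to specialize this to $Q = K$, which is legitimate precisely because $K \in \cy_{n,i}$. This produces
$$V_{n-1,1}(K, \rho B_n) \leq V_{n-1,1}(K, K) = V_n(K).$$

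To finish, I would evaluate the left-hand side using homogeneity of the mixed volume in its second entry together with the surface area identity $n V_{n-1,1}(K, B_n) = S(K)$:
$$V_{n-1,1}(K, \rho B_n) = \rho\, V_{n-1,1}(K, B_n) = \frac{\rho\, S(K)}{n} = \frac{\sigma_{i}\, d_K\, S(K)}{n}.$$
Combining the two displays yields $\sigma_{i} d_K S(K) \leq n V_n(K)$, which is~(\ref{cineq}).

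I expect the substance to lie entirely in the first step: Steinhagen's inequality is what converts the minimal width of $K$ into a uniform lower bound on the inradius of each $i$-shadow, and this is what realizes $K$ as a body whose $i$-shadows cover those of a ball. The only other point demanding care is the orientation of the roles in Theorem~\ref{omni-k}: the ball must sit in the ``covered'' (smaller) position of the mixed volume inequality, and the estimate becomes useful only once we are entitled to test it against $Q = K$ itself, which is exactly where the hypothesis $K \in \cy_{n,i}$ is used.
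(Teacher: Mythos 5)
Your proof is correct, and it reaches the conclusion by a more direct route than the paper's. The first half is identical: both arguments use Steinhagen's inequality~(\ref{steinhagen}) together with $d_{K_\xi}\geq d_K$ to show that every $i$-shadow of $K$ contains a translate of the $i$-ball of radius $\rho=\sigma_i d_K$, i.e.\ that the shadows of $K$ cover those of $\rho B_n$. The proofs then diverge. The paper forms the Minkowski combination $K^\epsilon=\epsilon K+(1-\epsilon)\rho B_n$, notes that its shadows are still covered by those of $K$, invokes Theorem~\ref{cylvol} (which itself rests on Theorem~\ref{omni-k} plus the Minkowski mixed volume inequality~(\ref{mmv})) to get $V_n(K^\epsilon)\leq V_n(K)$, and then extracts~(\ref{cineq}) by expanding $V_n(K^\epsilon)$ with Steiner's formula and letting $\epsilon\to 1$ --- in effect differentiating the Steiner polynomial at $\epsilon=1$ to recover the mixed volume $V_{n-1,1}(K,\rho B_n)$. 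You instead go straight to condition {\bf (v)} of Theorem~\ref{omni-k} with the ball in the covered slot, test against $Q=K$ (which is exactly where the hypothesis $K\in\cy_{n,i}$ enters, just as it does in the paper's application of Theorem~\ref{cylvol}), and evaluate $V_{n-1,1}(K,\rho B_n)=\rho S(K)/n$ by homogeneity in the second entry and the identity $nV_{n-1,1}(K,B_n)=S(K)$. This buys a shorter argument that needs neither the Minkowski inequality nor the limiting computation. The only point worth adding is the one-line degenerate case $V_n(K)=0$: then $d_K=0$ and both sides of~(\ref{cineq}) vanish, which also keeps you from applying the surface-area identity (which the paper states for bodies with interior) to a lower-dimensional $K$.
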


\begin{proof} If $V_n(K) = 0$ then $d_K = 0$ as well, so that
both sides of~(\ref{cineq}) are zero.

Suppose that $V_n(K) > 0$.  By Steinhagen's inequality~(\ref{steinhagen}) and the fact that $\dim K = n$, 
$$r_{K_\xi} \geq \sigma_{i} d_{K_\xi} \geq \sigma_{i} d_K,$$
for each subspace $\xi \in G(n,i)$,
where $d_{K_\xi}$ is computed from within the subspace $\xi$.

For $0 \leq \epsilon \leq 1$, denote 
$K^\epsilon = \epsilon K + (1-\epsilon) \sigma_{i} d_K B_n$,
where $B_n$ is an $n$-dimensional unit Euclidean ball.
Since
$$\sigma_{i} d_K B_n \subseteq r_{K_\xi} B_n \subseteq K_\xi \;\;\; \hbox{ up to translation, }$$
we have
$$K^\epsilon_\xi \subseteq \epsilon K_\xi + (1-\epsilon)K_\xi = K_\xi \;\;\; \hbox{ up to translation, }$$
for each subspace $\xi \in G(n,i)$.  If $K$ is an $i$-cylinder body, then
$V_n(K^\epsilon) \leq V_n(K)$, by Theorem~\ref{cylvol}.  Moreover, 
Steiner's formula~(\ref{stein-ball}) implies that 
$$V_n(K^\epsilon) = \epsilon^n V_n(K) + \epsilon^{n-1} (1-\epsilon) \sigma_{i} d_K S(K) 
+ (1-\epsilon)^2 f(\epsilon),$$
where $f(\epsilon)$ is a polynomial in $\epsilon$.
Since $V_n(K^\epsilon) - V_n(K) \leq 0$ for $0 \leq \epsilon < 1$, we have
$$(\epsilon^n -1)V_n(K) + \epsilon^{n-1} (1-\epsilon) \sigma_{i} d_K S(K) 
+ (1-\epsilon)^2 f(\epsilon) \leq 0$$
so that
$$\epsilon^{n-1} \sigma_{i} d_K S(K) 
+ (1-\epsilon) f(\epsilon) \, \leq \, (1+ \epsilon + \cdots + \epsilon^{n-1})V_n(K)$$
for all $0 \leq \epsilon < 1$.  As $\epsilon \rightarrow 1$ this implies
that
$\sigma_{i} d_K S(K) \leq n V_n(K)$.
\end{proof}

\section{A volume ratio bound}
\label{sec-ineq2}

In Section 2 we described $n$-dimensional convex bodies $K$ and $L$ such that the orthogonal projection 
$K_u$ can be translated inside $L_u$ for every direction $u$, while $V_n(K) > V_n(L).$
In such instances, one could ask instead for an upper bound on the volume ratio $\frac{V_n(K)}{V_n(L)}$.
An application of Theorem~\ref{cylvol} yields the following crude estimate.

\begin{theorem}  Let $K, L \in \KK_n$, and
suppose that the orthogonal $(n-1)$-dimensional projections 
$L_u$ of $L$ can cover the corresponding projections
$K_u$ of $K$ for all directions $u$.  
Then $V_n(K) \leq nV_n(L)$. 
\label{crude}
\end{theorem}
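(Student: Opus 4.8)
The plan is to reduce the statement to the situation already settled by Theorem~\ref{cylvol}, at the cost of a factor $n$, by replacing $L$ (which need not be a cylinder body) with a genuine cylinder that circumscribes it. First I would invoke the Second Shadow Containment Theorem~\ref{omni-k} with $k = n-1$: since the $(n-1)$-dimensional projections $L_u$ cover the corresponding $K_u$ for every direction $u$---that is, for every $\xi = u^{\perp} \in G(n,n-1)$---the equivalence of {\bf (i)} and {\bf (v)} gives $V_{n-1,1}(Q,K) \le V_{n-1,1}(Q,L)$ for every $Q \in \cy_{n,n-1}$. Every cylinder $Z = B \oplus I$, with $B$ an $(n-1)$-dimensional base and $I$ a segment, is $(n-1,1)$-decomposable and hence lies in $\cy_{n,n-1}$, so this inequality applies to cylinders.

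Next I would fix a cylinder $Z$ with $L \subseteq Z$. Monotonicity of mixed volumes (or the Circumscribing Lemma~\ref{clem}) gives $V_{n-1,1}(Z,L) \le V_{n-1,1}(Z,Z) = V_n(Z)$, whence $V_{n-1,1}(Z,K) \le V_n(Z)$. The Minkowski mixed volume inequality~(\ref{mmv}) supplies the matching lower bound $V_{n-1,1}(Z,K)^n \ge V_n(Z)^{n-1}V_n(K)$. Combining the two and dividing by $V_n(Z)^{(n-1)/n}$ (assuming $V_n(Z) > 0$) yields $V_n(K) \le V_n(Z)$ for every cylinder $Z$ circumscribing $L$. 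The theorem then follows once I produce one such cylinder with $V_n(Z) \le n\,V_n(L)$.

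The heart of the proof is therefore a cylinder lemma: every convex body $L$ is contained in a cylinder $Z$ with $V_n(Z) \le n\,V_n(L)$. I would prove this by choosing the generator direction $u$ to be that of a longest chord (a diameter) $[a,b]$ of $L$, so $u$ is parallel to $b-a$, and taking $Z = L_u \oplus w_u\,u$, the bounding cylinder in direction $u$, where $w_u$ is the width of $L$ in direction $u$; clearly $L \subseteq Z$. For an arbitrary direction $u$, let $\rho(y)$ denote the length of the chord $L \cap (y + \RR u)$ as a function on the shadow $L_u$. Convexity of $L$ makes $\rho$ concave, and comparing $\rho$ along each ray from its maximizer with the linear ``cone'' function that decays from $M_u = \max_y \rho(y)$ to $0$ on $\partial L_u$ gives $V_n(L) = \int_{L_u}\rho \,\ge\, \tfrac{1}{n}\,M_u\,V_{n-1}(L_u)$. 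Hence $V_n(Z) = V_{n-1}(L_u)\,w_u \le n\,V_n(L)\,\tfrac{w_u}{M_u}$. The purpose of choosing $u$ along a diameter is that $a$ and $b$ then minimize and maximize $x \cdot u$ over $L$, so the chord $[a,b]$ spans the full width and $M_u = w_u$, which forces $V_n(Z) \le n\,V_n(L)$.

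I expect the cone estimate $V_n(L) \ge \tfrac{1}{n}\,M_u\,V_{n-1}(L_u)$, together with the observation that the diameter direction realizes the full width (so that $M_u = w_u$), to be the main content; everything else is bookkeeping with results already in hand. The extremal case is the simplex, whose bounding cylinder over a facet has volume exactly $n\,V_n(L)$, consistent with the earlier corollary that simplices are not $(n-1)$-cylinder bodies. The degenerate case $V_n(L) = 0$ should be treated separately, but there the shadow-covering hypothesis quickly forces $K$ to be lower dimensional as well, so $V_n(K) = 0$ and the inequality holds trivially.
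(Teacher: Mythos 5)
Your proposal is correct and follows essentially the same route as the paper: both arguments circumscribe $L$ by the cylinder $L_v \oplus D_L \bar{v}$ over a diameter direction $v$, observe that this cylinder is an $(n-1)$-cylinder body whose shadows still cover those of $K$, and conclude $V_n(K) \leq V_n(Z) = D_L V_{n-1}(L_v) \leq n V_n(L)$. The only differences are cosmetic: you unwind Theorem~\ref{cylvol} into its ingredients (Theorem~\ref{omni-k} plus the Minkowski inequality~(\ref{mmv})) rather than citing it directly, and you prove the key estimate $V_n(L) \geq \tfrac{1}{n} D_L V_{n-1}(L_v)$ via concavity of the chord-length function, where the paper invokes a standard Steiner symmetrization (shaking) argument.
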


Recall that the {\em diameter} $D_K$ of a convex body $K$ is the maximum distance between any two points of the body $K$, and is also equal to the maximum width, that is, the maximum distance between any two parallel supporting hyperplanes of $K$.

\begin{proof} Suppose that the diameter $D_L$ of $L$ is realized in the unit direction $v$.  A standard
Steiner symmetrization (or, alternatively, shaking) argument implies that
$$V_n(L) \geq \tfrac{1}{n} \, D_L V_{n-1}(L_v).$$

Let $\bar{v}$ denote the unit line segment having endpoints at the origin $o$ and at $v$,
and let $C$ be the orthogonal cylinder in $\RR^n$ given by $C = L_v \oplus D_L \bar{v}$.  
After a suitable translation, we may assume that $L \subseteq C$.  From the original covering assumption for $L$ it then follows that each projection $K_u$ can be translated inside the corresponding projection $C_u$ of the cylinder $C$.
By Theorem~\ref{cylvol}, it then follows that
$$V_n(K) \leq V_n(C) = D_L V_{n-1}(L_v) \leq n V_n(L).$$
\end{proof}

\section{Some open questions}

The results of the previous sections motivate 
several open questions about convex bodies and projections.

\begin{enumerate}
\item[I.]  
Let $K, L \in \KK_n$ such that $V_n(L) > 0$, and let $1 \leq k \leq n-1$.
Suppose that the orthogonal projections 
$L_\xi$ of $L$ can cover the corresponding projections
$K_\xi$ of $K$ for all $\xi \in G(n,k)$.  \\

\noindent
What is the best upper bound for the ratio $\frac{V_n(K)}{V_n(L)}$?\\

\item[II.] Given a partition $\lambda$ of a positive integer $n$, define
$\cy_{\lambda}$ to be the collection of all convex bodies that can
be approximated by Blaschke sums of $\mu$-decomposable convex bodies,
taken over all partitions $\mu$ that refine the partition $\lambda$. \\

\noindent
If $\lambda$ and $\sigma$ are incomparable partitions of $n$ 
(with respect to partition refinement),
how are $\cy_{\lambda}$ and $\cy_{\sigma}$ related?  Can we describe their
relative geometric significance in the context of projections?\\

\item[III.]  
Zonoids can be thought of as the image of the projection
body operator on convex sets or of the cosine transform on 
support functions, and intersection
bodies are constructed by taking the Radon transform
of the radial function of a convex (or star-shaped) set
\cite{Gard2006,Lut1988,red}. 
Is there an analogous integral geometric description for
the families $\cy_{\lambda}$ and $\cy_{n,k}$?  \\

\item[IV.]  What simple tests, conditions, or inequalities determine
whether or not a convex body $K$ is an element of 
some $\cy_{\lambda}$ or $\cy_{n,k}$?\\

\item[V.]  Let $K, L \in \KK_n$ such that $V_n(L) > 0$, and let $1 \leq k \leq n-1$.
Suppose that the orthogonal projections 
$L_\xi$ of $L$ can cover the corresponding projections
$K_\xi$ of $K$ for all $\xi \in G(n,k)$.  \\

\noindent
Under what simple (easy to state, easy to verify) additional conditions
does it follow that $K$ can be translated inside $L$?
\end{enumerate}

\providecommand{\bysame}{\leavevmode\hbox to3em{\hrulefill}\thinspace}
\providecommand{\MR}{\relax\ifhmode\unskip\space\fi MR }
\providecommand{\MRhref}[2]{%
  \href{http://www.ams.org/mathscinet-getitem?mr=#1}{#2}
}
\providecommand{\href}[2]{#2}

\end{document}